\documentclass[11pt,reqno]{amsart}
\usepackage[colorlinks=true,allcolors=blue,backref=page]{hyperref}
\usepackage{color}
\usepackage{booktabs}
\usepackage{longtable}
\usepackage{array}
\usepackage{amsmath, amssymb, amsthm}
\usepackage{aliascnt}

\usepackage{mathrsfs}
\usepackage{mathtools}
\usepackage{multicol}

\usepackage[noabbrev,capitalize,nameinlink]{cleveref}
\crefname{equation}{}{}
\usepackage{fullpage}
\usepackage[noadjust]{cite}
\usepackage{graphics}
\usepackage{pifont}
\usepackage{tikz}
\usepackage{tikz-cd}
\usepackage{bbm}
\usepackage[T1]{fontenc}

\usetikzlibrary{arrows.meta}

\usepackage{environ}
\usepackage{framed}
\usepackage{url}
\usepackage[linesnumbered,ruled,vlined]{algorithm2e}
\usepackage[noend]{algpseudocode}
\usepackage[labelfont=bf]{caption}
\usepackage{cite}
\usepackage{framed}
\usepackage[framemethod=tikz]{mdframed}
\usepackage{appendix}
\usepackage{graphicx}
\usepackage[textsize=tiny]{todonotes}
\usepackage{tcolorbox}
\usepackage{enumerate}
\usepackage[shortlabels]{enumitem}
\allowdisplaybreaks[1]

\apptocmd{\sloppy}{\hbadness 10000\relax}{}{} % magic BibTeX spacing fix

\crefname{algocf}{Algorithm}{Algorithms}

\crefname{equation}{}{} %remove ``Equation''
\crefname{conjecture}{Conjecture}{Conjectures} %add ``Conjecture''
\AtBeginEnvironment{appendices}{\crefalias{section}{appendix}} %appendices

\crefformat{enumi}{#2#1#3}
\crefrangeformat{enumi}{#3#1#4 to~#5#2#6}
\crefmultiformat{enumi}{#2#1#3}%
{ and~#2#1#3}{, #2#1#3}{ and~#2#1#3}

\usepackage[final,color]{showkeys} % final disables visible key tags

\colorlet{refkey}{orange!20}
\colorlet{labelkey}{blue!30}

\crefname{algocf}{Algorithm}{Algorithms}

\numberwithin{equation}{section}
\newtheorem{theorem}{Theorem}[section]
\crefname{theorem}{Theorem}{Theorems}
\Crefname{theorem}{Theorem}{Theorems}

\newaliascnt{proposition}{theorem}
\newtheorem{proposition}[proposition]{Proposition}
\aliascntresetthe{proposition}
\crefname{proposition}{Proposition}{Propositions}
\Crefname{proposition}{Proposition}{Propositions}

\newaliascnt{lemma}{theorem}
\newtheorem{lemma}[lemma]{Lemma}
\aliascntresetthe{lemma}
\crefname{lemma}{Lemma}{Lemmas}
\Crefname{lemma}{Lemma}{Lemmas}

\newaliascnt{claim}{theorem}

\aliascntresetthe{claim}
\crefname{claim}{Claim}{Claims}
\Crefname{claim}{Claim}{Claims}

\newaliascnt{dichotomy}{theorem}

\aliascntresetthe{dichotomy}
\crefname{dichotomy}{Dichotomy}{Dichotomies}
\Crefname{dichotomy}{Dichotomy}{Dichotomies}

\newaliascnt{corollary}{theorem}

\aliascntresetthe{corollary}
\crefname{corollary}{Corollary}{Corollaries}
\Crefname{corollary}{Corollary}{Corollaries}

\newaliascnt{conjecture}{theorem}

\aliascntresetthe{conjecture}
\crefname{conjecture}{Conjecture}{Conjectures}
\Crefname{conjecture}{Conjecture}{Conjectures}

\newtheorem*{question*}{Question}

\newaliascnt{fact}{theorem}

\aliascntresetthe{fact}
\crefname{fact}{Fact}{Facts}
\Crefname{fact}{Fact}{Facts}

\theoremstyle{definition}
\newaliascnt{definition}{theorem}

\aliascntresetthe{definition}
\crefname{definition}{Definition}{Definitions}
\Crefname{definition}{Definition}{Definitions}

\newaliascnt{problem}{theorem}

\aliascntresetthe{problem}
\crefname{problem}{Problem}{Problems}
\Crefname{problem}{Problem}{Problems}

\newaliascnt{question}{theorem}

\aliascntresetthe{question}
\crefname{question}{Question}{Questions}
\Crefname{question}{Question}{Questions}

\newtheorem*{definition*}{Definition}

\newaliascnt{example}{theorem}

\aliascntresetthe{example}
\crefname{example}{Example}{Examples}
\Crefname{example}{Example}{Examples}

\newaliascnt{setup}{theorem}

\aliascntresetthe{setup}
\crefname{setup}{Setup}{Setups}
\Crefname{setup}{Setup}{Setups}

\theoremstyle{remark}
\newtheorem*{remark}{Remark}

\newcommand{\mb}{\mathbb}

\newcommand{\eps}{\varepsilon}

\newcommand{\mc}{\mathcal}

\newcommand{\on}{\operatorname}

\newcommand{\R}{\mathbb{R}}
\newcommand{\C}{\mathbb{C}}
\newcommand{\Z}{\mathbb{Z}}
\newcommand{\Q}{\mathbb{Q}}

\newcommand{\Disc}{\on{Disc}}
\renewcommand{\O}{\mathcal{O}}
\newcommand{\covol}{\on{covol}}

\title{Remarks on the disproof of the unit distance conjecture}
\author{Noga Alon}
\email{nalon@math.princeton.edu}

\author{Thomas F. Bloom}
\email{thomas.bloom@manchester.ac.uk}

\author{W. T. Gowers}
\email{w.t.gowers@dpmms.cam.ac.uk}

\author{Daniel Litt}
\email{daniel.litt@utoronto.ca}

\author{Will Sawin}
\email{wsawin@math.princeton.edu}

\author{Arul Shankar}
\email{ashankar@math.toronto.edu}

\author{Jacob Tsimerman}
\email{jacobt@math.toronto.edu}

\author{Victor Wang}
\email{vywang@as.edu.tw}

\author{Melanie Matchett Wood}
\email{mmwood@math.harvard.edu}

\begin{document}
\begin{abstract}
We present a short, digested, human-verified version of the
recent OpenAI-generated counterexample to the Erd\H{o}s unit distance conjecture,
and a sequence of reflections on it.
The argument relies crucially on ideas that may,
at least in retrospect,
be attributed to
Ellenberg-Venkatesh,
Golod-Shafarevich,
and Hajir-Maire-Ramakrishna.
\end{abstract}

\maketitle

\setcounter{tocdepth}{1}
{\tiny
\begin{multicols}{3}
\tableofcontents
\end{multicols}
}
\setcounter{tocdepth}{3}

\section{Introduction}

The following result is due to an internal model at OpenAI.
The proof we give in these remarks is a human-digested, somewhat simplified, and somewhat generalized version of the AI proof.\footnote{Throughout this document,
phrases such as ``AI proof'' or ``GPT proof'' all refer to the same file,
which was first mathematically generated in one shot by an internal model at OpenAI,
and then expositionally refined through human interactions with Codex.}

\begin{theorem}\label{thm:main}
There exists $\eps>0$ such that the following holds. There exists a sequence of point sets $\mc{P}_i$ in $\mb{R}^2$ such that $|\mc{P}_i|\to \infty$ and the number of unit distances in $\mc{P}_i$ is at least $|\mc{P}_i|^{1+\eps}$ for all $i$.
\end{theorem}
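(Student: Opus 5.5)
The plan is to build the point sets from number fields of growing degree $n$ with \emph{bounded root discriminant}, viewed in the plane through a single complex embedding. Unit distances will come from field elements all of whose conjugates have absolute value $1$. The key input is an infinite tower of CM fields $K_1\subset K_2\subset\cdots$ whose root discriminants $\Disc(K_i)^{1/[K_i:\Q]}$ stay bounded by some $\delta$. I would get this via Golod--Shafarevich: start from an imaginary quadratic field with many ramified primes, so its $2$-class field tower is infinite. Every field in that tower is again CM (it is Galois over $\Q$, its maximal real subfield has index two, and complex conjugation is central enough), and unramified extensions keep the root discriminant constant. If one later needs prescribed splitting behaviour, one would instead use Hajir--Maire--Ramakrishna style towers with tame ramification at finitely many primes and prescribed splitting.

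Fix $K=K_i$ of degree $n$ with maximal totally real subfield $K^+$ and a complex embedding $\sigma_0$. \textbf{Step 1 (many elements of constant relative norm).} Count $x\in\O_K$ with $|\sigma(x)|\le R$ for all embeddings $\sigma$. By Minkowski/geometry of numbers this count is at least roughly $(cR^2)^{n/2}/\Disc(K)^{1/2}\ge (cR^2/\delta)^{n/2}$. Each such $x$ gives $x\bar x\in\O_{K^+}$, which is totally positive with all conjugates at most $R^2$. The number of possible values is at most about $(C R^2)^{n/2}/\Disc(K^+)^{1/2}$, and $\Disc(K^+)^{1/2}\ge$ (root discriminant lower bound)$^{n/4}$, by Odlyzko/Minkowski-type bounds. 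The point is to compare $\Disc(K)$ against $\Disc(K^+)^2$ times the relative discriminant. Then pigeonhole yields a fixed $m\in\O_{K^+}$ and a set $S$ of size at least $\gamma^n$, for some $\gamma>1$ once $R$ is a large constant, with $x\bar x=m$ for all $x\in S$. This is the Ellenberg--Venkatesh-type step. The bounded root discriminant together with the factor-of-two gap between $K$ and $K^+$ is exactly what makes the ratio exponentially large in $n$.

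\textbf{Step 2 (units of modulus one and the point set).} Fix $x_0\in S$ and put $U=\{x/x_0: x\in S\}$. Every $u\in U$ satisfies $u\bar u=1$, so $|\sigma(u)|=1$ for every embedding. Also $U\subset J:=x_0^{-1}\O_K$, a lattice in Minkowski space of covolume $\asymp\Disc(K)^{1/2}/|N(x_0)|$. Let $\mc P'=\{y\in J: |\sigma(y)|\le T\ \forall\sigma\}$ and $\mc P=\sigma_0(\mc P')$; this map is injective. For $y\in\mc P'$ with all $|\sigma(y)|\le T-1$ and $u\in U$, we have $y+u\in\mc P'$ and $|\sigma_0(y+u)-\sigma_0(y)|=1$. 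A volume comparison shows the inner region contains a positive proportion $(1-1/T)^{n}$ of $\mc P'$, up to lattice-point error. Choosing $T$ a large constant, $|\mc P|\le (C'T^2 |N(x_0)|^{2/n}/\delta)^{n/2}=A^n$ is only exponential in $n$, while the number of unit distances is $\gg (1-1/T)^{n}|\mc P|\,|U|/2$. Tuning constants so that $\gamma(1-1/T)>1$ gives at least $|\mc P|^{1+\eps}$ unit distances, with $\eps\approx\log(\gamma(1-1/T))/\log A$ independent of $i$. Letting $i\to\infty$ proves \cref{thm:main}.

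\textbf{Main obstacle.} I expect the hard part to be the quantitative bookkeeping. We need lower bounds for lattice-point counts in boxes, uniform in $n$, when the covolume is exponentially large; this requires $R,T$ large relative to $\delta$, so a successive-minima/Minkowski argument rather than naive volume heuristics. We also need the exponent $\gamma$ from Step 1 to be genuinely $>1$: the count of possible norms $m$ must be smaller than the count of $x$ by an exponential factor. If the plain class field tower does not give enough room, I would first try replacing it with a Hajir--Maire--Ramakrishna tower in which many small primes split in $K/K^+$. That gives an independent source of elements of norm $m$, by combining split prime ideals $\mathfrak P\bar{\mathfrak P}$, and lets us pigeonhole inside an ideal class.
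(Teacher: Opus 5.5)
\textbf{The gap is in Step 1.} The pigeonhole on $x\bar x$ cannot give $\gamma>1$; the numbers go the wrong way. Put $\rho=|\Disc K|^{1/n}$ and $\rho_+=\Disc(K^+)^{2/n}$. The volume heuristic gives about $(\pi R^2)^{n/2}/(2^{-n/2}|\Disc K|^{1/2})$ elements $x$ and about $R^{n}/\Disc(K^+)^{1/2}$ admissible values $m$. Using $|\Disc K|=\Disc(K^+)^2N(\mathfrak d_{K/K^+})$, the average number of $x$ per $m$ is
\[
\frac{(2\pi)^{n/2}\,\Disc(K^+)^{1/2}}{|\Disc K|^{1/2}}
=\frac{(2\pi)^{n/2}}{\Disc(K^+)^{1/2}\,N(\mathfrak d_{K/K^+})^{1/2}}
\le\Bigl(\frac{2\pi}{\sqrt{\rho_+}}\Bigr)^{n/2}.
\]
The parameter $R$ cancels, so taking ``$R$ a large constant'' buys nothing. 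The factor-of-two gap also works against you: the count of $x$ is divided by $|\Disc K|^{1/2}\ge\Disc(K^+)$, but the count of $m$ only by $\Disc(K^+)^{1/2}$, so a larger $\Disc(K^+)$ hurts. Odlyzko's unconditional bounds give $\rho_+>4\pi^2\approx 39.5$ for totally real fields of large degree (the asymptotic bound is about $60.8$). So this average is exponentially \emph{small}, and a typical $m$ is not a relative norm at all. The counts you could actually prove rigorously (a Minkowski-type lower bound for the $x$'s, a packing bound for the $m$'s) are weaker still. Bounded root discriminant alone produces no modulus-one elements, and everything downstream depends on this step.

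\textbf{What is missing is an arithmetic source of representations.} The number of $x$ with $x\bar x=m$ is governed by the primes of $K^+$ dividing $m$ that split in $K$, counted with multiplicity, and by $h(K)$. So the tower must be built so that primes of bounded norm split. The paper takes a totally real tower $L$ from $G_T^S$ with $S=\{p,\infty\}$, so a fixed rational prime $p$ splits completely in every layer; with $p$ split in the genus field, the Frobenius relation does not lower the generator rank, so Golod--Shafarevich still applies. It then sets $K=L(i)$. This also settles CM-ness, which your claim about the $2$-class field tower of an imaginary quadratic field does not: complex conjugation acts by inversion on the class group, so already the first layer is CM only if the $2$-class group is elementary abelian.

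The paper next pigeonholes the $(k+1)^f$ ideals $\prod_j P_j^{a_j}\bar P_j^{k-a_j}$ (over the $f$ pairs above $p$) into $\on{Cl}(K)$ and takes $\alpha/\bar\alpha$ for ratios $(\alpha)$ within a class. This gives at least $(k+1)^f/h(K)$ elements of modulus one in the fixed lattice $p^{-2k}\O_K$. Using $\alpha/\bar\alpha$ instead of $x/x_0$ avoids having to control $x\bar x$ exactly or the conjugates of $x_0$. The decisive quantitative point is $h(K)\le|\Disc K|\le r^{2f}$: choosing the exponent $k$ large compared with the root-discriminant bound $r$ gives a per-coordinate gain $u=(k+1)r^{-2}$ that beats the geometric loss. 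Your fallback mentions split primes and an ideal-class pigeonhole, but not this mechanism (large $k$, or else many split rational primes, pitted against $h(K)$ and $\rho$), and that mechanism is what makes the proof work.

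Separately, in Step 2 the lattice-point heuristics for $J$ itself should be replaced, as in Lemma~\ref{L:unit_expansion}, by averaging over translates $a+J$ for the lower bound and a packing bound for $|\mc P|$.
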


\subsection{History of the problem}
The unit distance problem was originally raised in work of Erd\H{o}s \cite{Erdos1946SetsDistances}. In his original work, he noted that a set of $n$ points may have at most $O(n^{3/2})$ unit distances via noting that the unit distance graph cannot contain a $K_{2,3}$ (two unit circles can only intersect in at most $2$ points) while using a $\sqrt{n}\times \sqrt{n}$ grid to show that a set of $n$ points may have $n^{1+\Omega(1/\log\log n)}$ unit distances. 
The best current upper bound, $O(n^{4/3})$,
is due to Spencer, Szemer{\'e}di, and Trotter \cite{SpencerSzemerediTrotter1984UnitDistances}.
An upper bound of $n^{1+o(1)}$ was conjectured by Erd\H{o}s;
see \cite{BloomErdosProblem90} for details.

For more background, the reader may consult the introduction of
Alon, Buci\'c, and Sauermann \cite{Alon2025Typical},
a recent paper on the unit distance and distinct distances problems
for generic norms, which are shown to behave rather differently than the usual Euclidean norm on $\R^2$.
Indeed,
Alon has remarked that
% quoting Noga's email
problems in discrete geometry like this one are
closely connected to deep questions in real algebraic geometry and number
theory, as they deal with common roots of natural sets of real
polynomials.
% end quote
A similar insight appears to be part of the
Chain-of-Thought (CoT)
of the AI proof of Theorem~\ref{thm:main}:
``\dots in principle all extremal examples can be taken algebraic.  But the degree and height of that algebraic realization can be enormous\dots
Maybe that enormous degree is not just an annoyance but a source of possible counterexamples.  Number fields deserve a closer look.''

The idea of trying to use number fields
to construct counterexamples is not altogether new,
but there are subtleties to making it work
(discussed in some of the reflections in this note),
especially considering that perhaps most experts believed the conjecture $n^{1+o(1)}$ to be true.

\subsection{Sketch of the proof}

One starting point of the proof is to construct a large set $U$ of magnitude-$1$
algebraic numbers of bounded denominator $D$ in a number field $K$, in terms of
the class number $h(K)$ and
the splitting behavior of various prime ideals.
This will be done in Lemma~\ref{pigeons}
using the pigeonhole principle,
along the lines of an idea of
Michel, Soundararajan,
Ellenberg, and Venkatesh
recorded in \cite{EllenbergVenkatesh2007Reflection}.

It turns out to be convenient to assume $K$ is a CM field.
A key reason for this is that an element of a CM field has absolute value $1$
in some embedding if and only if it
has absolute value $1$ in all embeddings.
This will be convenient for ensuring that many elements of $U$
appear as differences $x-y$ of elements $x,y\in W$ for a nice bounded window $W$ of $D^{-1}\O_K$, to be discussed below.

The bounded window is constructed using either the geometry of numbers,
or an averaging (unfolding) argument.
This step requires embedding the ring of algebraic integers $\O_K$ of $K$ as a lattice inside $K\otimes \R$.
Equivalently, for a more analytically-minded reader,
it requires working with not just a single absolute value on $K$,
but the sup-norm over all conjugate absolute values on $K$.

The set of unit distance pairs created in $W$
decreases as the root discriminant of $K$ increases,
and thus it is advantageous to let $K$
be a finite layer of an infinite class field tower $M$ of Golod-Shafarevich type,
with $[K:\Q]\to \infty$, since such $K$ have bounded root discriminant.
The simplest way to make $U$ large in Lemma~\ref{pigeons}
is to ensure that this tower $M$ has at least one split rational prime $q$.
This fixed prime $q$ will split into many primes in $K$
as $[K:\Q]\to \infty$, and when used appropriately this drowns out
the main enemies, the class number $h(K)$ and discriminant $\Disc K$, which are relatively mild thanks to the controlled ramification in $M$.
Fixing an embedding $K\hookrightarrow \C$,
we obtain the desired sequence of point sets $\mathcal{P}_K := W \hookrightarrow \C=\R^2$,
indexed by fields $K$.

The AI argument originally required a suitably large finite number of split primes.
Construction of such Golod-Shafarevich towers is, however, not the novel part of the argument.
The construction of Golod-Shafarevich towers
with not just a single split prime,
% but infinitely many primes with bounded inertia degree,
but infinitely many split primes,
already appears in
the literature \cite{hajir2021cutting}
and has been used for other applications.

\subsection{Context for the proof}

The original grid construction can be thought of as an application of
Lemma~\ref{pigeons} to the CM field $K = \Q(i)$.
Counting magnitude-$1$ algebraic numbers
of bounded Weil height
in a fixed CM field $K$
has also been done before;
see, for instance, \cite{akhtari2025effective,Batyrev1998Manin}.
A novel ingredient of the AI argument is to take $[K:\Q]\to \infty$.
In classical Diophantine terms,
if $$r_{2,F}(\alpha) := |\{(x,y)\in \O_F^2: x^2+y^2 = \alpha\}|$$
denotes the number of ways to write $\alpha\in \O_F$ as a sum of two squares,
where $F$ is a totally real number field, then one corollary of the argument is that
there exist rational integers $D\ge 1$
such that $r_{2,F}(4D^2)$ grows exponentially in $[F:\Q]$
along an infinite tower of fields $F$.
Similar phenomena occur in the large-$q$ aspect
of analytic number theory over function fields $\mathbb{F}_q(t)$,
with $\log q$ being roughly analogous to $[F:\Q]$.
In contrast, $r_{2,F}(4D^2) \le O_{F,\eps}(D^\eps)$ for any fixed $F$ and $\eps>0$.

Infinite sequences of number fields have previously led to interesting combinatorial constructions.
Relatively recent examples include Venkatesh's sphere-packing construction \cite{Venkatesh2013SpherePackings}, and the work of Kopp et al.~on
equiangular lines, ray class fields, and the Stark conjectures \cite{Kopp2021SICPOVMStark,ApplebyFlammiaMcConnellYard2020RayClassFields}.

The fact that infinite class field towers provide lattices with strong asymptotic properties has been applied before in a number of nearby settings.  
Lenstra~\cite{Lenstra1986Codes} used such infinite class field towers to construct codes and study their asymptotic properties.
Litsyn and Tsfasman \cite[Section 7]{LitsynTsfasman1987Constructive} introduced lattice sphere packings using lattices from infinite class field towers.  This led to many further papers in coding theory studying asymptotic properties of these codes---some examples include \cite{VehkalahtiLuzzi2015NumberField,Maire2018,LuzziVehkalahtiLing2018AlmostUniversal,Guruswami2003Constructions, Liang2024QaryGV}.  See also \cite{Tsfasman1991GlobalFields} for a nice exposition of these lattices in the context of coding theory and sphere packing.
Sphere packing and coding theory are inherently high dimensional problems, and so the idea to use points from a high dimensional lattice is natural.
(To be clear, $\O_K$ embeds into $\C$ as an abelian group,
which is used to extract the final point sets for Theorem~\ref{thm:main}.
But it does not satisfy the discreteness axiom of a lattice if $[K:\Q] \ge 3$.)

Belolipetsky--Lubotzky
\cite{belolipetsky2012manifolds}
is an example of class field towers used to count lattices inside a fixed Lie group.

Ellenberg and Venkatesh \cite[Lemma 2.3]{EllenbergVenkatesh2007Reflection} famously used small split primes to bound the $\ell$-torsion in the class group of a number field.  They consider  $P_i^\ell$ for various split primes $P_i$ and use the prigeonhole principle to get two ideals representing the same element of the class group, and then use that ratio to produce an element of the number field which has small archimedian valuations and also bounded denominator.   At this level of description that is similar to the the strategy used here in Lemma~\ref{pigeons}, but many of the details and parameters of interest in the application are rather different, e.g. \cite{EllenbergVenkatesh2007Reflection} only needed one element of the number field and the precise size of the denominator was relevant, whereas the application of Lemma~\ref{pigeons} is to produce many elements and the size of the denominator is not relevant to Theorem~\ref{thm:main} (but may be relevant in optimizations).

In Proposition~\ref{HMRmethod1mod4} we will briefly review
the Frobenius-cutting argument of \cite{hajir2021cutting}
and how it can be applied to our situation to obtain infinitely many split primes.
A weaker version of this argument, cutting out relations only at a fixed finite depth,
was used in the original GPT paper.

\subsection{Organization of the paper}
In \cref{sec:proof} we give a complete proof of \cref{thm:main}. After \cref{sec:proof}, we collect a sequence of reflections on the proof and comments on the AI solution.

\subsection*{Acknowledgements}

We thank 
Peter Sarnak and Kannan Soundararajan
for helpful discussions.
NA is supported by NSF grant DMS-2154082. TB is a Royal Society University Research Fellow. WTG is a Professor at the Coll\`ege de France and a Research Professor at the University of Cambridge. His work in automatic theorem proving is supported by the Astera Foundation. DL is supported by an NSERC grant, “Anabelian methods in arithmetic and algebraic geometry,” a Sloan research fellowship, a Connaught New Researcher Award, and an Ontario Early Researcher Award. WS is supported by NSF grant DMS-2502029 and was a Sloan Research Fellow. AS is supported by an NSERC discovery grant. JT was partially supported by the Institute for Advanced Study, and the Charles Simonyi Endowment. VW is supported by NSTC grant 114-2115-M-001-010-MY2. MMW is supported by a
Packard Fellowship for Science and Engineering,  NSF Waterman award DMS-2140043, and a MacArthur Fellowship.

\section{Proof and further overview}\label{sec:proof}
% Proof of Theorem~\ref{thm:main} and further overview of strategy

\subsection{Statements of Main Lemmas}

We state here two lemmas, and motivate the entire argument around these lemmas.  The lemmas are the key novelty.  We also prove Theorem~\ref{thm:main} from the lemmas.

First, we give the geometry-of-numbers lemma.
In $\C^f$, let $B_R:=\{(x_1,\dots,x_f)\,|\, |x_i|\leq R \textrm{ for all $i$}\}$, a polydisc of ``radius'' $R$.
For a subset $S\subset \C^f$, let $U_S:=\{(x_1,\dots,x_f)\in S\,|\, |x_i|=1 \textrm{ for all $i$}\}$, the outermost points on the boundary of the region $B_1 \subset \C^f$.
If we have a lattice $\Lambda$ where $U_\Lambda$ is large, then we can form a point-set in the plane with many unit distances by taking $U_\Lambda\cap B_R$ for some $R$, and then projecting to any coordinate of $\C$.
Since every lattice point in $B_{R-1}$ translated by any point in $U_\Lambda$ is in 
$B_{R}$, if the projection is injective, we obtain at least $\frac{1}{2}|U_\Lambda||\Lambda\cap B_{R-1}|$
unit distance pairs among at most $|\Lambda\cap B_{R}|$ points.  
The following lemma quantifies the size of these sets.

\begin{lemma}\label{L:unit_expansion}
    Let $f$ be a positive integer, and $0<\delta\leq 1$.  Let $\Lambda$ be a full rank lattice in $\C^f$, such that for all non-zero $x\in \Lambda$, at least one coordinate $x_i$ of $x$ has $|x_i|\geq \delta$.  Further assume that the projection of $\Lambda$ onto one of the coordinates of $\C^f$ is injective.  
    Let $v\geq\delta^{-2}\operatorname{covol}(\Lambda)^{1/f}$.  
    
    Suppose that $|U_\Lambda|\geq u^f$ for some $u>0$.
    Then for every $R\geq 2$,
    there exists a translate $a+\Lambda$ of $\Lambda$ such that
    the set $(a+\Lambda)\cap B_R$ projected onto a coordinate gives a point set $\mc{P}$ in the plane with 
    $$
     2\nu(\mc{P})\geq \left(\frac{u \pi R^2}{4v\delta^{2}}\right)^f \textrm{ and } |\mc{P}|\leq\left(\frac{9R^2}{\delta^2}\right)^{f}.
    $$
\end{lemma}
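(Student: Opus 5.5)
The plan is to average over translates, the standard unfolding argument. Identify $\C^f$ with $\R^{2f}$, so that $\operatorname{covol}(\Lambda)$ is the Lebesgue volume of a fundamental domain $F$. Pick $a$ uniformly at random in $F$. For each $a$ consider the set $(a+\Lambda)\cap B_R$ and the ordered pairs $(x,x+w)$ with $x\in (a+\Lambda)\cap B_{R-1}$ and $w\in U_\Lambda$.

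\emph{Unit distances.} Since $|w_i|=1$ for every coordinate $i$, we have $x+w\in B_R$, and $x+w\in a+\Lambda$. After projecting to the distinguished coordinate $i_0$ on which $\Lambda$ projects injectively, the images of $x$ and $x+w$ are at distance exactly $|w_{i_0}|=1$. Injectivity on $\Lambda$ gives injectivity on $a+\Lambda$, because two points of $a+\Lambda$ differ by an element of $\Lambda$. Hence distinct ordered pairs give distinct ordered pairs of points of $\mc{P}$. Each unordered unit-distance pair is counted at most twice, so $2\nu(\mc{P})\ge |U_\Lambda|\cdot|(a+\Lambda)\cap B_{R-1}|$.

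\emph{Averaging.} By unfolding, $\mathbb{E}_a|(a+\Lambda)\cap B_{R-1}| = \operatorname{vol}(B_{R-1})/\operatorname{covol}(\Lambda) = (\pi (R-1)^2)^f/\operatorname{covol}(\Lambda)$. Now use three facts: $R\ge 2$ gives $(R-1)^2\ge R^2/4$; the hypothesis on $v$ gives $\operatorname{covol}(\Lambda)\le (v\delta^2)^f$; and $|U_\Lambda|\ge u^f$. Some $a$ attains at least the average, and for that $a$
$$2\nu(\mc{P})\ \ge\ u^f\,\frac{(\pi R^2/4)^f}{(v\delta^2)^f}=\left(\frac{u\pi R^2}{4v\delta^2}\right)^f.$$

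\emph{Point count.} This bound holds uniformly for every translate, so the choice of $a$ above is unconstrained. Any two distinct points of $a+\Lambda$ differ by a nonzero element of $\Lambda$, which therefore has some coordinate of modulus at least $\delta$. Consequently the polydiscs $p+B_{\delta/2}$ for $p\in(a+\Lambda)\cap B_R$ are pairwise disjoint. They all lie in $B_{R+\delta/2}$. Comparing volumes gives
$$|\mc{P}|\le \left(\frac{(R+\delta/2)^2}{(\delta/2)^2}\right)^f=\left(\frac{4(R+\delta/2)^2}{\delta^2}\right)^f\le \left(\frac{9R^2}{\delta^2}\right)^f,$$
using $\delta\le 1\le R/2$, so that $R+\delta/2\le 5R/4$ and $4(5R/4)^2\le 9R^2$.

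There is no serious obstacle. The only points requiring care are that injectivity of the projection and the unit-modulus property pass correctly to translates, and that the volume bound must be uniform in $a$, so that a single $a$ satisfies both inequalities simultaneously.
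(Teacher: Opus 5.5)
Your proof is correct and follows the paper's argument essentially step for step: averaging over translates gives $|(a+\Lambda)\cap B_{R-1}|\ge \operatorname{vol}(B_{R-1})/\covol(\Lambda)$, translating by $U_\Lambda$ and projecting injectively gives the unit-distance count, and a $\delta/2$-packing volume comparison (uniform in $a$) bounds $|\mc{P}|$, where your estimate $R+\delta/2\le 5R/4$ is just sharper than the paper's $3R/2$. One small precision: the closed polydiscs $p+B_{\delta/2}$ can touch when some $|p_i-p'_i|=\delta$ exactly, so you should say their interiors are disjoint (the overlap has measure zero), which is all the volume comparison needs.
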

Observe that $9R^2\delta^{-2}>1$, since $R\ge 2$ and $\delta\le 1$.
Thus given $v$, if we can make $u>\frac{36v}{\pi}$, we can choose any $R\ge 2$ and then
\begin{equation}
\label{explicit-general}
\frac{\log(2\nu(\mc{P}))}{\log|\mc{P}|}\geq \frac{\log \frac{u \pi R^2}{4v\delta^{2}}}{\log \frac{9R^2}{\delta^2}}>1.
\end{equation}
If we can make $u>\frac{36v}{\pi}$ and keep $u,v,\delta$ constant while letting $f\rightarrow\infty$ and still finding  lattices satisfying Lemma~\ref{L:unit_expansion}, then we have $|\mc{P}|\rightarrow\infty$
(since $|\nu(\mc{P})|\rightarrow\infty$),
while $\frac{\log(2\nu(P))}{\log|\mc{P}|}$ stays bounded below by a number larger than $1$, proving Theorem~\ref{thm:main}.

\begin{remark}
    Note the above argument does not require making $u$ large in terms of $\delta$.  This is important, as the argument here does not have the flexibility to do that, and indeed produces $\delta^{-1}$ quite large in terms of $u$.
\end{remark}

The parameter $v$ bounds the ``skewness'' of the lattice $\Lambda$.
Taking constant size scalings of Minkowski lattices of algebraic integers in (totally imaginary) number fields of growing degree but bounded root discriminant, which are well-known to exist by work of Golod-Shafarevich, will keep $\delta,v$ constant while $f\rightarrow\infty$.
So the remaining challenge is to produce many elements of these fields with complex absolute value $1$ in every embedding, so that we can make $u$ large compared to $v$.

This will be done by Lemma~\ref{pigeons}. 
Let $e(P)$ denote the ramification index of a prime ideal $P$ in a number field $K$.
For the sake of exposition the following result is more general than necessary,
which may also be useful to readers interested in
modifying or optimizing the overall method.
\begin{lemma}
\label{pigeons}
Let $K$ be a number field embedded in $\C$.
Assume $K = \overline{K}$,
where $\overline{K}$ denotes the complex conjugate of $K$.
Let $P_1,\dots,P_s$ be pairwise distinct prime ideals of $\O_K$
such that $P_i \ne \overline{P}_j$ for all $1\le i,j\le s$.
Let $k_1,\dots,k_s$ be positive integers.
Let $$Q := \prod_{j=1}^{s} (P_j\overline{P}_j)^{k_j}
\subseteq \O_K$$
be an ideal.
Let $U := \{u\in Q^{-2}: |u| = 1\}$.
Then $$|U| \ge \frac{\prod_{j=1}^{s} (k_j+1)}{h(K)}.$$
Moreover, $Q^{-2} \subseteq D^{-1} \O_K$, where
$$D := \prod_{p\mid N(P_1P_2\cdots P_s)}
p^{\max_{j: p\mid N(P_j)} \lceil{2k_j/e(P_j)}\rceil} \in \Z.$$
\end{lemma}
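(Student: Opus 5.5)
The plan is the Ellenberg--Venkatesh pigeonhole argument. Consider the ideals
\[
I_{\mathbf a} := \prod_{j=1}^s P_j^{a_j}\,\overline{P}_j^{\,k_j-a_j}, \qquad \mathbf a=(a_1,\dots,a_s),\ 0\le a_j\le k_j .
\]
There are $\prod_j (k_j+1)$ such tuples. Distinct tuples give distinct ideals by unique factorization, because the $P_j,\overline P_j$ are pairwise distinct primes; this is exactly where the hypotheses $P_i\ne P_j$ and $P_i\ne\overline P_j$ enter. Map $\mathbf a$ to the ideal class $[I_{\mathbf a}]\in \mathrm{Cl}(K)$. By pigeonhole some class $C$ has a fiber $A$ with $|A|\ge \prod_j(k_j+1)/h(K)$. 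Fix $\mathbf a_0\in A$. For each $\mathbf a\in A$ the fractional ideal $I_{\mathbf a}I_{\mathbf a_0}^{-1}$ is principal, say $=(\alpha_{\mathbf a})$ with $\alpha_{\mathbf a}\in K^\times$. I claim $u_{\mathbf a}:=\alpha_{\mathbf a}/\overline{\alpha_{\mathbf a}}$ lies in $U$, and that $\mathbf a\mapsto u_{\mathbf a}$ is injective on $A$.

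\textbf{Absolute value.} Since $\overline K=K$, complex conjugation restricts to an automorphism of $K$, so $\overline{\alpha}\in K$. In the fixed embedding, $|\alpha/\overline\alpha|=|\alpha|/|\overline\alpha|=1$. (This holds in every embedding when $K$ is CM, but only the given embedding is needed here.)

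\textbf{Membership in $Q^{-2}$.} Conjugation sends $P_j\mapsto\overline P_j$, so $\overline{I_{\mathbf a}}=\prod_j \overline P_j^{\,a_j}P_j^{\,k_j-a_j}$. Hence
\[
(u_{\mathbf a})=\frac{I_{\mathbf a}\,\overline{I_{\mathbf a_0}}}{I_{\mathbf a_0}\,\overline{I_{\mathbf a}}}
=\prod_j P_j^{\,2(a_j-a_{0,j})}\,\overline P_j^{\,-2(a_j-a_{0,j})}.
\]
Each exponent is at least $-2k_j$, so $(u_{\mathbf a})\supseteq Q^{-2}$, i.e.\ $u_{\mathbf a}\in Q^{-2}$.

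\textbf{Injectivity.} The displayed factorization determines the differences $a_j-a_{0,j}$, hence determines $\mathbf a$. So distinct $\mathbf a$ give distinct principal ideals $(u_{\mathbf a})$ and therefore distinct elements. This gives $|U|\ge|A|$.

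\textbf{The denominator bound.} Show $Q^2\supseteq D^{-1}$-compatibly, i.e.\ $D\in Q^2$; then $D\,Q^{-2}\subseteq Q^2Q^{-2}=\O_K$. It suffices to check $v_P(D)\ge v_P(Q^2)$ for each prime $P$ dividing $Q$. Take $P=P_j$ or $\overline P_j$ lying over $p$. Then
\[
v_P(D)=e(P)\cdot \max_{i:\,p\mid N(P_i)}\lceil 2k_i/e(P_i)\rceil,
\]
while $v_P(Q^2)=2\sum_i k_i\,[P\in\{P_i,\overline P_i\}]$. Since the $P_i,\overline P_i$ are pairwise distinct, $P$ equals exactly one of them, so $v_P(Q^2)=2k_i$ for a single index $i$ with $e(P_i)=e(P)$ (conjugate primes have equal ramification index). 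Then
\[
e(P)\lceil 2k_i/e(P_i)\rceil\ge 2k_i,
\]
which finishes the inclusion.

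The one step needing care is this last local check: $v_P(Q^2)$ really is $2k_i$ for a single index and not a sum. This relies on the distinctness hypotheses and on $e(\overline P)=e(P)$. I don't expect any step to be a genuine obstacle; the pigeonhole count and the injectivity are straightforward.
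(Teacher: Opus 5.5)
Your proof is correct and follows the paper's own argument. It applies pigeonhole to the classes of $\prod_j P_j^{a_j}\overline{P}_j^{\,k_j-a_j}$, takes ratios $\alpha$ within the largest fiber, and sets $u=\alpha/\overline{\alpha}$. Injectivity comes from the factorization of $(u)$; the paper phrases this as $(u)=(\alpha^2)$ with $\alpha\overline{\alpha}\in\O_K^\times$. Finally it uses the inclusion $D\O_K\subseteq Q^2$, and your local valuation check for this is more detailed than the paper's one-line assertion.

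One small slip: exponents $\ge -2k_j$ give $(u_{\mathbf a})\subseteq Q^{-2}$, not $(u_{\mathbf a})\supseteq Q^{-2}$. The inclusion $(u_{\mathbf a})\subseteq Q^{-2}$ is what $u_{\mathbf a}\in Q^{-2}$ means.
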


\begin{remark}
Assume $s\ge 1$.
Then Lemma~\ref{pigeons} is vacuous if $K$ is totally real,
and otherwise Dirichlet's unit theorem shows that $|U| = \infty$
unless $K$ is CM. 
%Assume $K$ is neither totally real nor CM.
%Fix a complex (non-real) embedding of $K$,
%and let $F$ be the fixed subfield of $K$ under complex conjugation.
%Then $F\ne K$.
%Thus the map $\O_K^\times\to \O_F^\times$ sending $x$ to %$x\overline{x}$ has positive-rank kernel, because $rank(\O_K^\times) > %rank(\O_F^\times)$ by the unit-defect definition of a CM field.
So in its current form, without including bounds on $u$ in other embeddings of $K$,
Lemma~\ref{pigeons} is useful only if $K$ is CM.
\end{remark}

We will apply Lemma~\ref{pigeons} with $K$ a CM field, with complex conjugation automorphism $c:K\to K$.
Thus all prime ideals $P$ of $\O_K$ above a rational prime that splits completely in $K$ satisfy $P\ne cP$.  Also, above such a rational prime there are many $P_i$ in $K$.
Moreover, an element of $K$ has absolute value $1$
in some embedding if and only if it
has absolute value $1$ in all embeddings, so when $K$ is CM
Lemma~\ref{pigeons} produces elements in $U_\Lambda$ as needed for 
Lemma~\ref{L:unit_expansion}.

To keep $\delta^{-1}$, which will be the $D$ from Lemma~\ref{pigeons},
bounded, we should only take primes $P_i$ above some fixed set of rational primes.  How many and which rational primes should we use?  It turns out not to matter much.
 One idea, due to Will Sawin after he read GPT's proof, is to use primes $P_i$ above a single rational prime.  If we have a sequence of number fields of bounded root discriminant and growing degree, all split completely above at a given rational prime $p$, then we can take all the $k_j$ large compared to that bounded root discriminant to obtain $u$ as desired, and completing the argument.  That such a sequence exists is an immediate consequence of the Golod-Shafarevich Theorem  \cite{GolodShafarevich1964ClassFieldTower,GolodShafarevich1965ClassFieldTowersTranslation} and Shafarevich's relation bounds for Galois groups of class towers \cite{Shafarevich1963RamificationOriginal,Shafarevich1966RamificationTranslation} (see, e.g. Koch \cite[Theorem 11.5]{Koch2002GaloisTheoryPExtensions}).  Indeed, this kind of consequence of these results is widely used, and indeed much stronger results (such as requiring infinitely many primes splitting completely \cite{hajir2021cutting}) are known.

Let $S$ and $T$ be finite disjoint sets of places of $\Q$, with $\infty\in S$.  Let $G_T^S$ be the Galois group of the maximal pro-$2$ extension $\tilde{\Q}$ of $\Q$, unramified outside $T$ and split completely at all primes in $S$, a \emph{class tower}.\footnote{We remark that GPT's original argument used the maximal pro-$3$ extension instead.
The reason for this is unclear, but one line of the CoT mentions that there might be ``nuisances'' for $2$-towers that disappear for $3$-towers.
However, $2$-towers are in fact notationally a bit simpler to work with.}
If $T$ consists of odd primes, for a number field $L\subset \tilde{\Q}$, since only primes in $T$ are ramified and they are ramified tamely, $|\Disc L| \leq \prod_{p\in T}p^{[L:\Q]}$, and so such $L$ have bounded root discriminant (which is, by definition $|\Disc L|^{1/[L:\Q]}$).
Given such an $L,$ we will use $K=L(i)$ as our CM field, and have $|\Disc K|\leq \prod_{p\in T\cup\{2\}}p^{2[L:\Q]}$.  Let $f=\deg(L)$. Then the covolume of $\O_K$ in the Minkowski embedding of $\O_K$ in $\C^f$ is $2^{-f}\sqrt{|\Disc K|}$.
% As a sanity check, the covolume of $\Z[i]$ in $\C$ is $1 = 2^{-1} \sqrt{4}$.

\begin{proof}[Proof of Theorem~\ref{thm:main}]
Let $T$ be any finite set of odd primes with $|T|\geq 6.$
Then $d(G_T^{\{\infty\}})$, which is the minimal number or generators of $G_T^{\{\infty\}}$, is the $2$-rank of the maximal abelian elementary abelian quotient of $G_T^{\{\infty\}}$ (i.e. the Frattini quotient).  This quotient is the Galois group of $L_T$, the maximal totally real multi-quadratic field unramified outside $T$, so by the theory of quadratic fields, $d(G_T^{\{\infty\}})$ is $|T|-1$ unless $T$ contains no primes that are 3 mod $4$, in which case it is $|T|$ (or see \cite[Theorem 11.8]{Koch2002GaloisTheoryPExtensions}).  Then if $S$ is the union of $\{\infty\}$ and a finite set of primes split  completely in $L_T(i)$, the group
$G_T^S$ is the quotient of $G_T^{\{\infty\}}$ by $|S|-1$ relations---the Frobenius elements at the finite places of $S$.  Since these Frobenius elements are trivial in the Frattini quotient, $d(G_T^S)=d(G_T^{\{\infty\}})$, and by \cite[Theorems 11.5,11.8]{Koch2002GaloisTheoryPExtensions}
we have a bound on the relation rank, $$r(G_T^S)\leq r(G_T^{\{\infty\}})+|S|-1 =
d(G_T^{\{\infty\}})+|S|-1.$$ 
As long as
$
|T|-1+|S|-1\leq (|T|-1)^2/4$, then
$r(G_T^S)\leq
d(G_T^S)^2/4,
$
and $G_T^S$ is infinite by the Golod-Shafarevich Theorem \cite{GolodShafarevich1964ClassFieldTower}.
This gives tremendous latitude to pick $T$ and $S$. 
For simplicity, we will take $|S|=2$, where $S=\{p,\infty\}$.  As just one small example, we can let
$T=\{3,5,7,11,13,17\}$ and $S=\{101,\infty\}$.  We have that $L_T=\Q(\sqrt{5},\sqrt{13},\sqrt{17},\sqrt{21},\sqrt{33})$ and that $101$ splits completely in $L_T$  so $d(G_T^S)=5$, and $r(G_T^S)\leq 6$ so 
$G_T^S$ is infinite.  Thus there are number fields $L_j$ of degrees $f_j\rightarrow\infty$ totally real, unramified outside $T$, such that $L_j(i)$ are CM and split completely at (finite) primes in $S$.  We take $K_j=L_j(i).$  Let $r=\prod_{q\in T\cup\{2\}}q $, which is an upper bound for the root discriminant of any $K_j$.

Then we apply Lemma~\ref{pigeons} with all $k_j=k=\lceil 18r^3/\pi\rceil-1$ to these fields in the $f$ pairs of complex conjugate primes $P_i,\bar{P}_i$ above $p.$  
By \cite[p. 143, (7)]{BorelPrasad1989Finiteness}, we have for $[K:\Q]\geq 4,$
$$
h_K\leq |\Disc_K|.
$$
(Thanks to Chat GPT for help finding a convenient reference.  Several different standard arguments can give bounds of the form $h_K\leq c_1|\Disc_K|^{c_2}$ for various constants $c_1,c_2$ and the above is not optimal but suffices for the argument.)
So we have $|U|\geq (k+1)^{f_j} r^{-2f_j}.$  We have $D=p^{2k}$.

Then we apply Lemma~\ref{L:unit_expansion} with $\Lambda=p^{-2k}\O_{K_j}$ in its Minkowski embedding obtain a $U$ with $u=(k+1)r^{-2}$.  Note that since any non-zero element of $\O_{K_j}$ has norm at least 1 and thus has magnitude at least $1$ in some complex embedding, we can take $\delta=p^{-2k}.$  Projection of $p^{-2k}\O_{K_j}$ onto any coordinate in the Minkowski embedding is injective.  
We have that $\covol(p^{-2k}\O_{K_j})=2^{-f_j}\delta^{2f_j} \sqrt{|\Disc K_j|}.$
 Since $\delta^{-2} (2^{-f_j}\delta^{2f_j} \sqrt{|\Disc K_j|})^{1/(f_j)}\leq r/2$, we let $v=r/2$.  Since, $u=\lceil 18r^3/\pi\rceil r^{-2}>36v/\pi$, this proves Theorem~\ref{thm:main} as described after the statement of Lemma~\ref{L:unit_expansion}. 

Explicitly, the exponent in \eqref{explicit-general}
can be taken to be
\begin{equation}
\label{explicit-special}
\sup_{R\ge 2} \frac{\log \frac{u \pi R^2}{4v\delta^{2}}}{\log \frac{9R^2}{\delta^2}}
= 1 + \frac{\log{\frac{u\pi}{36v}}}{\log{\frac{36}{\delta^2}}}\approx 1 + 6.24 \cdot 10^{-38}
\end{equation}
where $v=r/2$,
$\delta\ge 101^{-2\lceil 18r^3/\pi\rceil}$,
and $u=\lceil 18r^3/\pi\rceil r^{-2}$,
with $r = 2\cdot 3\cdot 5\cdot 7\cdot 11\cdot 13\cdot 17$.
\end{proof}

GPT wrote a slightly more difficult argument, taking all $k_j$ to be $1$ and primes $P_i$ above $t$ rational primes.  This requires having an infinite class tower, while controlling the splitting of $t$ rational primes, but keeping the root discriminant small compared to $2^t.$ However, a class tower (with no splitting conditions) has a Galois group of approximately balanced presentation of dimension approximately the number of ramified primes allowed in the tower, and it takes quadratically many relations in terms of generators to make the group finite. Thus, the root discriminant can be a product of $\ell$ primes that we pick, something like $e^{c\ell\log \ell}$, and we can control the splitting of $t$ primes, where $t$ can be taken to be quadratic in $\ell$, and still keep the class tower infinite.  This means we can make $2^t$ plenty large compared to any power of the root discriminant, while having an infinite class tower split at $t$ rational primes.
In retrospect, this AI argument was unnecessarily subtle,
because one can take $t$ arbitrarily large with respect to $\ell$
by Proposition~\ref{HMRmethod1mod4}.

\begin{proposition}
[Cf.~\cite{hajir2021cutting}]
\label{HMRmethod1mod4}
There exists an infinite tower of totally real fields over $\Q$
with bounded root discriminant
and with infinitely many completely split primes $q\equiv 1\bmod{4}$.
\end{proposition}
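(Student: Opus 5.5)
We follow the Frobenius-cutting strategy of Hajir--Maire--Ramakrishna. Fix a finite set $T$ of odd primes, say with all $p\in T$ congruent to $1 \bmod 4$ or at least with $|T|$ large, and work with the pro-$2$ group $G=G_T^{\{\infty\}}$, the Galois group of the maximal totally real pro-$2$ extension of $\Q$ unramified outside $T$. As in the proof of Theorem~\ref{thm:main}, $G$ is tamely ramified, so every finite layer has root discriminant at most $\prod_{p\in T}p$. We have $d(G)\ge |T|-1$ and $r(G)\le d(G)$. Choose $|T|$ large enough, e.g.\ $|T|\ge 8$, that the Golod--Shafarevich inequality holds with a margin: there should be a fixed $t_0\in(0,1)$ with
\[
1-d(G)t_0+\bigl(r(G)+N\bigr)t_0^2<0
\]
for some $N\ge 1$.

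The key step is a diagonal construction. We successively choose primes $q_1,q_2,\dots\equiv 1\bmod 4$ and an increasing chain of finite layers $L_1\subset L_2\subset\cdots$. The goal is to add the Frobenius relations $\mathrm{Frob}_{q_n}$ one at a time while keeping the quotient infinite. A finite number of relations cannot be handled by the plain count $r\le d^2/4$ once infinitely many relations are added. We therefore use the refined Golod--Shafarevich criterion for pro-$p$ groups with weighted relations. If the relations $\rho_n$ lie in the Zassenhaus filtration subgroups $G_{(m_n)}$, it suffices that
\[
1-d\,t+\sum_n t^{m_n}+r(G)\,t^2<0
\]
for some $t\in(0,1)$.

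So at stage $n$, after choosing $q_1,\dots,q_{n-1}$, we pick $m_n$ large, say with $\sum_n t_0^{m_n}<\eps$. We then use Chebotarev density in the finite Galois extension $\tilde\Q^{G_{(m_n)}}(i)/\Q$, the fixed field of $G_{(m_n)}$ adjoined with $i$. This produces a prime $q_n\notin T$ that splits completely in it, so $q_n\equiv 1\bmod 4$ and $\mathrm{Frob}_{q_n}\in G_{(m_n)}$. This requires the fixed field to be a finite extension, which holds since $G$ is finitely generated pro-$2$. One subtlety is that the Frobenius is only defined up to conjugacy, but $G_{(m)}$ is normal, so this causes no problem.

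Let $H$ be the quotient of $G$ by the closed normal subgroup generated by all $\mathrm{Frob}_{q_n}$. It is the Galois group of the maximal subextension, inside the totally real $T$-ramified tower, in which every $q_n$ splits completely. By the weighted Golod--Shafarevich inequality, $H$ is infinite, and its fixed tower gives totally real fields of unbounded degree with root discriminant at most $\prod_{p\in T}p$. Each $q_n\equiv1\bmod 4$ splits completely in every layer, which proves the proposition.

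The main obstacle is the weighted Golod--Shafarevich inequality: justifying that relations lying deep in the Zassenhaus filtration contribute only $t^{m_n}$ to the bound. This is a standard form of the theorem (Vinberg, Koch), so we would quote it from Koch's book rather than reprove it. We also need to check the bookkeeping that the relation count $r(G)$ for the base group is finite and bounded by $d(G)$, which follows from Koch's Theorems 11.5 and 11.8 as already used above.
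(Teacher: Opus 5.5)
Your proposal is correct and is essentially the paper's proof. The paper cites the Hajir--Maire--Ramakrishna cutting argument (refined Golod--Shafarevich, with infinitely many Frobenius relations chosen deep in the Zassenhaus filtration so that $\sum_n t^{m_n}$ stays small) and modifies it by applying Chebotarev to $F(i)$ instead of $F$; this is exactly your use of $\tilde\Q^{G_{(m_n)}}(i)$ to get $\mathrm{Frob}_{q_n}\in G_{(m_n)}$ and $q_n\equiv 1\bmod 4$ at once, the only implicit point being that these relations lift into the $m_n$-th Zassenhaus subgroup of the free pro-$2$ group presenting $G$, which holds since Zassenhaus filtrations are compatible with surjections.
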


\begin{proof}
Without the condition $q\equiv 1\bmod{4}$,
this would follow from
\cite[Theorem 2.8 of arXiv version, or Theorem 4 of published version]{hajir2021cutting}
with $K=\Q$, $S=\{3,5,7,11,13,17,\infty\}$, and $p=2$, for instance.
In order to arrange for $q\equiv 1\bmod{4}$,
one can modify the (recursive) proof of \cite[loc. cit.]{hajir2021cutting} as follows:
Every time the Chebotarev density theorem is applied to obtain a split rational prime $q$
of a number field $F$,
we instead apply the Chebotarev density theorem to the number field $F(i)$.
\end{proof}

\subsection{Proofs of Main Lemmas}

\begin{proof}[Proof of Lemma~\ref{L:unit_expansion}]
By averaging over $a\in \C^f/\Lambda$, there exists a choice of $a$ such that
\begin{equation*}
|(a+\Lambda)\cap B_{R-1}|
\ge \left(\frac{\pi (R-1)^2}{\covol(\Lambda)^{1/f}}\right)^f.
\end{equation*}
Fix an injective coordinate projection $\mathcal{P}$
of the bounded window $W := (a+\Lambda)\cap B_R$.
The translation-by-$U_\Lambda$ argument
described before Lemma~\ref{L:unit_expansion} implies that
\begin{equation*}
2\nu(\mathcal{P}) \ge |U_\Lambda||(a+\Lambda)\cap B_{R-1}|
\ge u^f \left(\frac{\pi (R-1)^2}{\covol(\Lambda)^{1/f}}\right)^f
\ge \left(\frac{u \pi R^2}{4v\delta^{2}}\right)^f,
\end{equation*}
since $R-1 \ge R/2$.
% Note: This means (R-1)^2 \ge R^2/4.
On the other hand, $\Lambda$ is $\delta$-separated
in the sup-norm on $\C^f$,
so the sumset (Minkowski sum) of $W$ and $B_{\delta/2}$
has volume $\ge |W| |B_{\delta/2}|$,
and $\le |B_{R+\delta/2}| \le |B_{3R/2}|$.
Thus
\begin{equation*}
|\mathcal{P}| = |W| \le \frac{|B_{3R/2}|}{|B_{\delta/2}|}
= \left(\frac{9R^2}{\delta^2}\right)^{f}. \qedhere
\end{equation*}
%(It might also be possible to give a lower bound on
%$|\mathcal{P}| = |W| = |(a+\Lambda)\cap B_R|$,
%but we do not need to do so.)
\end{proof}

\begin{proof}[Proof of Lemma~\ref{pigeons}]
Use the pigeonhole principle on the classes in $\on{Cl}(K)$
of the ideals $$\prod_{j=1}^{s} P_j^{a_j} \overline{P}_j^{k_j-a_j},$$
where $0\le a_j\le k_j$.
Taking ratios of the ideals landing in the most frequent ideal class, gives
$$\ge \frac{\prod_{j=1}^{s} (k_j+1)}{h(K)}$$
pairwise distinct principal ideals $(\alpha)$
for which $\alpha \overline{\alpha} \in \O_K^\times$.
Now let $u = \alpha / \overline{\alpha} \in Q^{-2}$.
Observe that the ideals $(u) = (\alpha^2)$ are pairwise distinct.
Moreover, $Q^2\mid D \O_K$,
so $Q^2\supseteq D \O_K$,
so $Q^{-2}\subseteq D^{-1} \O_K$.
\end{proof}

\section{Noga Alon}
The Erd\H{o}s unit distance problem \cite{Erdos1946SetsDistances} raised in 1946 
is among the best known open problems
in Combinatorics. It is also arguably the best known problem in 
Discrete Geometry. Indeed, its description in the book of Brass, Moser
and Pach on Research Problems in Discrete Geometry (\cite{BrassMoserPach2005}, Chapter 5)
is:
``The following problem of Erd\H{o}s \cite{Erdos1946SetsDistances} is possibly the best
known (and simplest to explain) problem in combinatorial geometry:
How often can the same distance occur among $n$ points in the plane?''

Let $U(n)$ denote the maximum possible number of unit distances 
determined by $n$ points in the Euclidean plane. Erd\H{o}s
proved that $U(n) \geq n^{1+\Omega(1/\log \log n)}$, and the best
known upper bound is $U(n) \leq O(n^{4/3})$. This was first proved by
Spencer, Szemer\'{e}di and Trotter \cite{SpencerSzemerediTrotter1984UnitDistances} in 1984. Several simpler
proofs of the same bound up to a constant factor 
have been given over the years, the shortest and most elegant one
is due to Sz\'{e}kely \cite{Szekely1997CrossingNumbers}.
More on the rich history of this problem can be found
in \cite{BrassMoserPach2005}. 

The common belief, conjectured by 
Erd\H{o}s, has been that $U(n) \leq n^{1+o(1)}$.
This has been one of Erd\H{o}s' favorite problems, I have heard
him myself mentioning the problem multiple times in his
lectures. 
I believe it would be fair to say that every mathematician working in 
Combinatorial Geometry thought about this 
problem, and lots of mathematicians working in other areas spent
at least some time thinking about it.

Let me also add that although this problem may look at first as 
a recreational one this is not the case,
it is in fact closely related to other 
mathematical areas including Number Theory and Algebraic Geometry.

The solution of the problem by the internal model of Open AI is,
in my opinion, an outstanding achievement, settling a long-standing
open problem. The fact that the correct answer is not $n^{1+o(1)}$
is surprising, and the construction and its analysis apply 
fairly sophisticated tools
from algebraic number theory in an elegant and clever way. 
As explained by the remarks of some of my colleagues here there 
are several reasons that 
explain why AI tools can be better than humans in finding such
a construction. With or without a full agreement with these reasons,
the fact is that the AI was able to do here what lots of excellent
human researchers tried and failed to do.

Like other mathematicians who had the opportunity to  
experiment, even if only briefly in my case, with ChatGPT Pro 5.5, 
my impression has been that AI tools 
are capable of changing research in mathematics in a dramatic way.
The new spectacular solution of the Erd\H{o}s unit distance problem 
convinces me that it is hard to
overestimate the full potential impact of this change.

\section{Thomas Bloom}

This was one of Erd\H{o}s' favourite problems -- he first asked it in 1946 \cite{Erdos1946SetsDistances} and returned to it many times. (The site \url{www.erdosproblems.com}, on which it is Problem \#90, currently lists 14 separate references, and there are no doubt more.) The influential collection of `Research Problems in Discrete Geometry' by Brass, Moser, and Pach \cite{BMP} describes it as `possibly the best known (and simplest to explain) problem in combinatorial geometry'. For an AI to produce a solution to a problem of this calibre is both surprising and impressive.

One rough way to quantify the interest which Erd\H{o}s himself had in his problems is by the prize value attached to them. He first offered a monetary reward in 1982 \cite{Er82e}, of \$300, for a proof or disproof of the upper bound $n^{1+o(1)}$. Interestingly, he wrote in \cite{Er90} that the upper bound of $n^{1+o(1)}$ would be `very difficult to prove' \emph{if true}; certainly Erd\H{o}s had seen many surprising constructions that he had missed at this point, and perhaps saw the possibility that he was also missing something here. The first time the higher prize of \$500 was offered in print appears to be 1995 \cite{Er95}. He was quite clear here that either a proof or disproof of the upper bound $\leq n^{1+o(1)}$ would qualify, so we can unambiguously say that the AI has solved a \$500 Erd\H{o}s problem.

Erd\H{o}s was consistent in his belief that the upper bound of $n^{1+o(1)}$ was true; no doubt he was encouraged as the decades went by without anyone improving on his original lattice based construction. As a result, it is likely that the most of the human efforts spent on this problem have been on trying to prove the upper bound, rather than spending serious time on trying to disprove it.

Perhaps tempting fate, on 16th April I included this problem in a blog post on \url{www.erdosproblems.com} titled (somewhat tongue in cheek) `Top 10 Erd\H{o}s Problems'. I wrote this list as a response to some of the discourse around existing AI solutions to several other, much easier, Erd\H{o}s problems, which had prompted some people to wrongly assume that all of Erd\H{o}s' problems were inconsequential trivialities that had only remained unsolved because nobody had tried to prove them. This is very much not the case -- many of Erd\H{o}s' problems have been intensively studied for decades, and been extremely fruitful in the depth and complexity of the techniques that have been created in attempts to solve them.

The unit distance problem was the only problem from discrete geometry I included; the distinct distance problem was also a strong contender, but since this has been (almost) completely solved by Guth and Katz, the unit distance problem was a better example of a still open yet natural question that has resisted proof for decades.

While I believed that AI would make some progress on at least a couple of the problems in that list eventually, I did not expect this to happen just one month later!

If the result of this paper was a proof of the unit distance problem, that would be truly incredible. While I was still very surprised to hear of the this result, this was dampened slightly when I learnt it was a construction of a counterexample, and still further when I learnt that nature of the construction, being (with the benefit of hindsight) a natural, albeit highly non-trivial, generalisation of the original lattice-based construction of Erd\H{o}s. 

On examining the construction, it becomes more clear how people had missed this before --  it requires the confluence of several different unlikely events: that a good mathematician is
\begin{enumerate}
\item spending significant time in thinking about the unit distance conjecture in the first place;
\item seriously trying to disprove it, despite the oft-repeated belief of Erd\H{o}s that it is true;
\item believes that there is mileage in generalising the original construction to other number fields, and so is willing to expend significant time in exploring such constructions\footnote{See the remarks of Will Sawin and Jacob Tsimerman in this paper for a discussion of how one might have started along such a path, but dismissed such an approach as `this can never work'.}; and
\item sufficiently familiar with the relevant parts of class field theory to recognise that the appropriately phrased question about infinite towers of number fields with appropriate parameters can be solved using existing theory.
\end{enumerate}
The AI met all of these criteria, and its success here echoes previous achievements: it often produces the most surprising results by persevering down paths that a human may have dismissed as not worth their time to explore, combining superhuman levels of patience with familiarity with a vast array of technical machinery.

When assessing the importance and influence of an AI-generated proof, a question I ask myself is: has this taught us something new about the problem? Do we understand discrete geometry better now? I think the answer is a moderated yes: this shows that there is a lot more that number theoretic constructions have to say about these sorts of questions than we suspected; moreover, that the number theory required can be very deep. No doubt many algebraic number theorists will be taking a close look at other open problems in discrete geometry in the coming months.

On the other hand, perhaps some in the area will be a little disappointed with how little this tells us: it does not introduce any powerful new geometric tools, or hitherto unsuspected structural results, that a \emph{proof} of the unit distance conjecture would likely have called for. Still, while perhaps not the proof of a conjecture that we had hoped for, no doubt this construction and the ideas involved will have a major impact in discrete geometry.

One aspect of this proof should not be overlooked: while the original proof produced by AI was completely valid, it was significantly improved by the human researchers at OpenAI and the many other mathematicians involved in the present paper. The human still plays a vital role in discussing, digesting, and improving this proof, and exploring its consequences.

The frontiers of knowledge are very spiky, and no doubt the coming months and years will see similar successes in many other areas of mathematics, where long-standing open problems are resolved by an AI revealing unexpected connections and pushing the existing technical machinery to its limit. AI is helping us to more fully explore the cathedral of mathematics we have build over the centuries; what other unseen wonders are waiting in the wings?
\section{W T Gowers}

I do not have the background in algebraic number theory to make a detailed assessment of the disproof of Erd\H os's unit-distance conjecture, so instead I shall make some tentative comments about what it tells us about the current capabilities of AI. 

My experience of learning about the solution was an interesting one. I heard about it from S\'ebastien Bubeck during a Zoom call, but misunderstood what he was saying and thought that the model had proved an upper bound of $n^{1+o(1)}$. The Zoom call took place in the late afternoon and I spent the evening adjusting my world view: if AI could come up with a proof like that, then maybe it would be all over for mathematicians very soon. The next morning I and the other authors of this paper received an email about the result, and only then did I understand that it had disproved the conjecture rather than proving it, which came as a big relief. It is interesting to reflect on why it should have been a relief, since either way this is the first example of a famous (in my mathematical circles at least) open problem being solved by AI with no human intervention once it had been trained and then given the problem to solve.

The short answer is that, without knowing anything about the solution, I could more easily imagine a model coming up with a counterexample while still lacking some essential mathematical capabilities than imagine it coming up with a proof. The breakthrough of Larry Guth and Nets Katz that solved the closely related Erd\H os distance problem introduced new and unexpected tools into combinatorial geometry that led to the solutions of many further problems. However, the unit distance conjecture continued to resist (for reasons we now understand!), so appeared to require a further major idea. It was exactly that -- a major new idea -- that would have been disturbing. A counterexample, on the other hand, was something one could imagine a computer coming up with by trying lots of things and at some point getting lucky, without needing ``deep insight''. To be clear, I am not saying that that is what actually happened -- just that it was possible to imagine that it had happened. 

Now that I have seen the solution and seen some of the reactions to it by people who understand it in detail, I find myself not only trying to assess what AI has achieved in this particular case, but also thinking more generally about how such assessments can possibly be made. Can we still identify some mathematical capability that human mathematicians have and AI does not yet have? If so, what might that capability be, and how could one go about demonstrating that AI still lacks it? Almost certainly the answer to the first question will have to be quantitative rather than qualitative. That is, we are unlikely to be able to show that there is something we can do that current AI models cannot in principle do at all, but we might be able to show that there are things we can still do much more efficiently than those models. But when a model has just solved a major open problem, it is clear that even a modest conclusion like that will not be straightforward to demonstrate, and indeed isn't obviously true. 

I run a group in automatic theorem proving at Cambridge, and one of our activities has been to try to understand what it is for a mathematics problem to be difficult. An informal idea that has come up in discussions with Jacob Loader, Marco dos Santos and Anand Tadipatri is something that one might call ``Kolmogorov complexity modulo experts''. The rough idea is to consider the difficulty of a proof to be the length of the shortest sequence of bits that would provide experts with enough hints to reconstruct the proof. It isn't quite as simple as that, since an expert can always do without a $k$-bit hint by doing a brute-force search through all $k$-bit sequences. However, one can get round that objection by adjusting the measure to something like $2^k+t$, where $k$ is the length of the ``hint sequence'' and $t$ is the time needed to reconstruct the proof. Even this is not perfect, since it ignores the time it takes to explore the tree of potential hint sequences, but one can adjust for that too. Another objection is that some hints may be short but very surprising: there are proofs that take a long time to be discovered because although they are simple, they involve ideas that have ``no right to work'' or that come from very different areas of mathematics. Again, one can modify the measure to take this phenomenon into account. 

For the purposes of this discussion I shall assume that such difficulties have been ironed out, and I shall refer to the ``complexity'' of a proof to be some suitably developed notion of Kolmogorov complexity modulo experts. The following questions then arise naturally: are human mathematicians still able to find proofs with a larger complexity than AI is capable of finding, and what is the complexity of the solution that has just been discovered to the Erd\H os unit distance problem? 

For the moment, I'd like to imagine a hint sequence as something like a multi-part question on a question sheet, designed to help a suitably expert mathematician work through the proof by reducing it to a sequence of exercises. The first part of such a hint sequence might be something like this.
\bigskip

\centerline{(i) Look for a counterexample}
\bigskip

\noindent When assessing the ``length'' of this hint, one should imagine how it might be efficiently encoded. For this purpose, one could imagine having the help of a ``hint book'', in which bit sequences are translated into hints. The hints in such a book would be rather generic, and the book would be designed in such a way that hints that are frequently useful would have short encodings. In such a book, the two length-1 sequences (0) and (1) might might well encode ``look for a counterexample'' and ``look for a proof'', respectively, in which case this would be a one-bit hint. However, this example also demonstrates the need to take surprisal into account. For instance, I have myself thought a small amount about the unit-distance problem, but it never occurred to me to try to disprove it. Somehow I was too convinced by a story that turned out to be incorrect: that the distinct-distances conjecture follows from the unit-distance conjecture, which doesn't have an obvious counterexample, and the distinct-distance conjecture has now been proved, so it is likely that eventually the unit-distance conjecture will be proved too. As a result, my subjective probability that the conjecture was false was quite low, which I believe was the commonly held view, making the hint particularly useful. Thus, the value of the one bit of information should be taken as greater than 1 in order to reflect the considerable updating of one's perception of the problem once one has obtained it.

A second hint might be this.
\bigskip

\centerline{(ii) Take the best known construction and generalize it.}
\bigskip

\noindent In the hypothetical hint book, highly generic and frequently applied moves like these would presumably have very short encodings. I also think this hint would be sufficient for a mathematician to make significant inroads into the proof. The standard construction, due to Erd\H os, is a square grid of size $\sqrt n\times\sqrt n$. Since one is talking about distances between points, it becomes very natural to think of the elements of this grid as Gaussian integers, and once one does that, one has an algebraic structure that can be generalized. 

To extract the most possible from this example, one looks for an integer $m\leq n$ that can be written as a sum of two squares in as many ways as possible, since this will give rise to many distances equal to $\sqrt m$ (and afterwards one can of course normalize by dividing by $\sqrt m$). This one can do by taking $m$ to have many prime factors congruent to 1 mod 4, and the result is a lower bound for the unit-distance problem of $n^{1+c/\log\log n}$. 

We can look at the construction as follows: the circle of radius $\sqrt m$ contains a large set $U$ of Gaussian integers, and a $\sqrt n\times\sqrt n$ grid of Gaussian integers has many differences that belong to $U$. Without the need for a further hint, it is natural to try to generalize this by taking another ring of integers that resembles the Gaussian integers, and that leads quite quickly to the idea that ChatGPT sets out: to replace the ring $\mathbb Z$ by the ring of integers in a totally real field $L$ and to replace $\mathbb Q(i)$ by $L(i)$. 

At this stage in the proof-discovery process, one needs to ``drop a level'' and start considering more detailed calculations about the properties that $L$ will need to have. Here I must leave it to somebody else to suggest where the difficulties would arise for an expert who has decided to pursue this line of attack, and what a minimal set of hints might be that would enable such an expert to complete the proof. That is not very easy to assess because of the time/hint tradeoff mentioned earlier, not to mention a multitude of psychological factors such as the degree of motivation of the expert in question. However, I asked Will Sawin, who suggested to me that the following hint would have been helpful to make substantial further progress, though probably not sufficient to reduce generating the rest of the proof to an exercise. (Of course, that depends on what counts as an exercise, which is itself not very easy to make precise.) 
\bigskip

\centerline{(iii) Try a sequence of number fields of increasing degree,} 
\centerline{but work with prime ideals of bounded norm.}
\bigskip

\noindent As written, this hint is not so generic, but in context it is more so, since by this stage number fields are already part of the picture, which should allow for considerable compression. For example, maybe the first half of the hint could be something like, ``Replace what you were previously trying to prove by a non-uniform version.'' 

Even if more hints are still needed, my impression is that the total length, even taking surprisal into account, is fairly short. Many of the ideas needed for the proof were present in the literature already, and for such ideas either no hint is needed, since the expert is aware of that piece of literature, or a highly generic ``look it up" hint would be enough. 

Clearly I am speculating wildly, but my hunch is that the hint sequence that would have been necessary to guide an expert to Guth and Katz's proof of the Erd\H os distinct-distances conjecture would have been quite a bit longer, because that proof involved several surprising ideas with non-obvious connections to the problem. If that hunch is correct, it would provide some kind of justification for my relief that this result was a disproof rather than a proof. Perhaps what we are seeing at the moment is not that AI is about to overtake human mathematicians, but rather that there are certain styles of problem where it has a distinct advantage. It has an encyclopaedic knowledge of mathematics, and it does not have to worry nearly as much as we do about time management, so it is good at finding surprising connections, and it can afford to try quite hard to prove statements that seem unlikely to be true -- provided, in both cases, that the complexity of the proofs it finds is not too high. 

Is there any reason to suppose that a conclusion like this is correct? More to the point, if it is correct now, is there any reason to suppose that it will remain correct in, say, two years' time? The only way I can see that happening is if AI is for some fundamental reason exploring ``hint space'' in a much less efficient way than we do. I don't see any obvious signs of that from its chain of thought, but it could perhaps be that a lot of additional ``actual thought'' is going on behind each step of what it presents as its chain of thought. My current bet is that progress in AI mathematics is \emph{not} about to reach a plateau, and that we will soon see AI solutions to many problems that we will find hard to explain away as easier than expected with hindsight. (Here I am assuming that there will at least be the usual kinds of efficiency gains, where what a large model can do this year, a much smaller one can do next year. Without that, solving lots of problems might be too expensive, not to mention environmentally unfriendly.)

In any case, there is no doubt that the solution to the unit-distance problem is a milestone in AI mathematics: if a human had written the paper and submitted it to the Annals of Mathematics and I had been asked for a quick opinion, I would have recommended acceptance without any hesitation. No previous AI-generated proof has come close to that. Furthermore, even if it is correct that AI cannot yet find a proof that needs a long hint sequence, such proofs are very difficult to find for humans as well, so in the unlikely event that progress in AI mathematics does suddenly stall, we have still probably entered an era where it will become very difficult for humans to compete with AI at solving mathematical problems.\footnote{I chose my words carefully there, since solving problems is not all that mathematicians do. My guess is that AI will soon reach a high level at other activities such as building theories, formulating definitions and asking interesting questions, but that is a separate discussion.} 

\section{Daniel Litt}
Before hearing about its solution, I had little experience with the unit distance problem—I had seen it on erdosproblems.com and was aware of the statement but knew little about the history of the problem. After an internal model at OpenAI produced a solution, I was asked to check its correctness by Mark Sellke and Mehtaab Sawhney at OpenAI, with the idea that I had some familiarity with the algebraic number theory being used, particularly around Golod-Shafarevich’s construction of infinite class field towers. It did not take long for me to convince myself that the solution was correct, not to mention quite clever and natural. I spent a fair amount of time the following weekend nerd-sniped into thinking about various possible optimizations and variants, though I think these have now been subsumed by the ideas explained by others in this paper. 

This is the first example of a result produced autonomously by an AI that I find exciting in itself, as opposed to as a leading indicator. That said, the mathematical context is quite far from my expertise and I will leave others better-qualified than me to comment further on the solution and its interest. Instead, I will speculate briefly about what it tells us about the human practice of mathematics.

There are a few examples of relatively well-known open problems resolved via a fairly short, clever argument: famously, the finite field Kakeya conjecture, proven by Dvir; the sensitivity conjecture, proven by Huang; and a few others. Arguably, this solution to the unit distance problem has the same flavor. My sense is that such examples are historically relatively rare, though I suspect we are about to find out about quite a few more. What explains the existence of such problems? How common are they? It seems to me that the near-term future of research mathematics revolves to some extent around the answers to these questions.

One possible explanation for such low-hanging fruit: those working on the problem have anchored onto a non-optimal approach or belief about the truth (for example, in this case, that Erd\H{o}s’s conjecture on the topic was true). Another: the solution requires ideas from areas with which most of those working on the problem are unfamiliar. These explanations, if correct, should cause us some discomfort. They suggest that incentives towards specialization and silo-ing, though understandable, have cost us some high-quality science.

In my own area---algebraic and arithmetic geometry---there are simply very few practitioners, and so arguably all problems are attention-bottlenecked. On the other hand, my sense is that it is unusual for well-known open problems to have answers accessible to clever argument, as opposed to new theory. Many of us work on \emph{programs}, rather than \emph{problems} (though I personally count myself as a problem-solver).  It will be interesting to see how AI impacts these areas; the impact thus far has been minimal, though I expect this state of affairs will not last long.

Finally, it is illuminating to contrast the most productive current approach to doing mathematics by AI to the way humans mathematicians work. At any given time a human will, driven by their personal curiosity, choose a small number of questions and try to understand them deeply. By contrast, the best autonomous AI mathematics has been produced by trawling through entire problem lists and solving some portion of the listed problems. This is a vast expansion of the attention aimed at mathematical problems, and perhaps will serve to better focus future human attention and curiosity.

\section{Will Sawin}

I find it difficult to summarize my overall impressions, which would in any case overlap with the points raised by other mathematicians, so instead I will make two somewhat disconnected observations:

Let me first explain a subtlety that likely made it more difficult for mathematicians working on the problem to discover this argument: Erd\H{o}s's original lower bound, using a set of lattice points, may be interpreted as taking points of absolute value bounded by a large parameter in the ring of integers $\mathbb Z[i]$ of the imaginary quadratic field $\mathbb Q(i)$. 

In trying to generalize this, the most natural approach is to take points of absolute value bounded by a large parameter in the ring of integers $\mathcal O_K$ of some fixed CM field $K$. On the other hand, OpenAI's internal model's approach was to take a set of points of absolute value bounded by a fixed parameter in the ring of integers $\mathcal O_K$ of a CM field $K$ of increasing degree. Getting from the first approach to the second approach would be much more intuitive if the bounds obtained from the first approach grew with the degree of the field $K$. However, they do not, for the following reason:

 If one normalizes the set so that unit distances come from elements of $\mathcal O_F$ of norm an element $\alpha$ whose ideal $(\alpha)$ is a product of small split primes, which for simplicity we will take to be $\prod_{P \in S} P$ for some set $S$ of primes of $\mathcal O_F$, then the number of such $\alpha$ will be $2^{ \#S}$ times a factor coming from the class group and we will need to take the absolute value to be bounded by $\left(\prod_{P \in S} NP\right)^{1/{2d}}$ which means the set of points has size proportional to $\prod_{P \in S} NP$ , where $NP = \# (\mathcal O_K/P)$ is the norm of $P$. (We could take $k$'th powers of some primes for $k>1$, but this does not significantly affect the asymptotics in this setting.) We want to maximize $\prod_{P \in S} P$ while minimizing $\prod_{P \in S} NP$  while ensuring all primes $P$ in $S$ split in $K$, which is done by taking $S$ to be the set of prime ideals of $\mathcal O_F$ split in $\mathcal O_K$ that have norm $<X$. 

Having done this, the prime number theorem for prime ideals gives that $\# S = (1+o(1)) \frac{X }{2\log X} $ and $\prod_{P \in S} P= e^{ (1+o(1) X}$, which exactly recovers Erd\H{o}s's lower bound. Thus, there is no indication that the choice of field matters. One could examine the lower-order terms in the prime number theorem, but for $X$ large the dominant terms come from the zeroes of the Dedekind zeta function of $K$, and these terms, of size $X^{1/2}$, are greater than the terms arising from the class number. There is no apparent reason one field would be better-behaved than another field for this, as each zero gives oscillatory contributions that give better bounds for some $X$  and worse bounds for other $X$, and thus no reason to try a sequence of fields.

Let me now explain the potential to generalize this approach to two other, related questions in combinatorial geometry: The distinct distances problem, and the unit distances problem in dimension $3$. In both cases, there are serious obstacles to any possible generalization.

The distinct distances problem asks for the minimum number of distinct distances between a set of $n$ distinct points in the plane. The distinct distance problem and the unit distance problem are related, in that by the pigeonhole principle, a set of $n$ points which contains pairs of points at only $m$ distances must have some at least $n(n-1)/m$ pairs of points of distance $d$ for at least one $d$, and then dividing by $d$ we get a set of points with $n(n-1)/m$ pairs of points at unit distance. However, it is much harder to upper bound the number of distinct distances than to lower bound the number of pairs of points at unit distance. In fact, Guth and Katz proved~\cite{GuthKatz2015DistinctDistances} a lower bound of the form $n/\log n$ for the number of distinct distances represented by $n$ points n the plane, which differs from Erd\H{o}s's upper bound by a facto of only $\sqrt{\log n}$.

There is a natural approach to the distinct distance problem using number fields. Given a CM field $K$ with totally real subfield $F$ of degree $d$, we can take a set of points in $\mathcal O_K$ whose absolute value, in each complex embedding, is at most $R$. One expects roughly $ (\pi R^2)^d / \sqrt{\Delta_K}$ such points, and the squared distance between any two such points is a totally positive element of $\mathcal O_F$ whose absolute value in each real embedding, is at most $4R^2$, and one expects the number of elements of $\mathcal O_F$ with these properties is roughly $(4R^2)^d/ \sqrt{\Delta_F}$. However, not all elements of $\mathcal O_F$ can be the squared length: Just as a positive integer cannot be the squared length of a vector in the standard lattice, i.e. a sum of two squares, if it is divisible, with odd multiplicity, by a prime congruent to $3$ modulo $4$, an element of $\mathcal O_F$ cannot be the squared length of an element of $\mathcal O_K$ if it is divisible, with odd multiplicity, by a prime ideal of $\mathcal O_F$ inert in $\mathcal O_K$. For each prime $P$ of $\mathcal O_F$, the proportion of elements of $\mathcal O_F$ which $P$ divides to odd multiplicity is $\frac{1}{ 1+ NP^{-1}}$.

Hence, if we construct $\mathcal O_F$ to have a set $S$ of small primes that remain inert in $\mathcal O_K$, and we assume the primes behave independently, we will get a number of distinct distances that is roughly $\frac{ (4R^2)^d}{\sqrt{\Delta_F}} \prod_{P \in S} \frac{1}{1+NP^{-1}}$. To get a nontrivial bound, the factor $\prod_{P\in S} (1+ NP^{-1})$ must be better than $(4/\pi)^d \frac{\sqrt{\Delta_K}}{\sqrt{\Delta_F}}$. This is much more difficult than the unit distance problem, since the quantity to beat is similar, growing exponentially in $d$ for fields of bounded root discriminant, but the quantity that must beat it is much smaller, getting a factor of $1+ NP^{-1}$ for each prime instead of $P$. If we could somehow ensure that $k/\log k$ primes of $\mathbb Q$ split completely in $F$ and are inert in $K$, the best case scenario is that these are the first $k/\log k$ primes, which by the prime number theorem are roughly the primes $<k$. Then $\prod_{P\in S} (1+ NP^{-1})$  would be $\prod_{p < k} (1+p^{-1})^d = e^{ d (\log \log k+O(1))}$ by Mertens' theorem. However, to obtain these split primes by Golod-Shafarevich, we would need $\sqrt{k/\log k}$ ramified primes, which would make the discriminant exponentially large in $d \sqrt{k/\log k}$, and thus much larger than $e^{ d (\log \log k+O(1))}$.

The unit distance problem in $\mathbb R^3$ asks for the maximum number of pairs of points at unit distance in a set of points in $\mathbb R^3$. The points in a ball of radius $n^{1/3}$ in $\mathbb Z^{3}$ have squared distances $O(n^{2/3})$ and thus some squared distance must arise from at least $n \cdot n/ n^{2/3} =n^{4/3}$ pairs of points. More precisely, the number of times a given squared distance $m$ arises is proportional to the number of representations of $m$ as a sum of three squares. It follows from the class number formula that the number of representations of $m$ as a sum of three squares is $\sqrt{m}$ times a bounded value times $L(1, \chi_m)$. Walfisz~\cite{walfisz1942class} showed that $L(1,\chi_m)$ can be larger than $\log \log m$ infinitely often (which is sharp under the generalized Riemann hypothesis, as shown by Littlewood~\cite{Littlewood1928}). It follows that some squared distance can arise from at least a constant times $n^{4/3} \log \log n$ pairs of points.

Again, there is a natural approach by number fields. Given a totally real field $F$ of degree $d$ and a positive definite quadratic form $Q$ in three variables over $\mathcal O_F$, one can embed $\mathcal O_F^3$ into $\mathbb R^3$ in such a way that the squared length of a vector $v\in \mathcal O_F^3$ is given by $Q(v)$. Taking the set of vectors in $\mathcal O_F^3$ of length bounded by $R$ in each real embedding, one expects roughly $R^{3d}( \sqrt{N\Delta_Q} \sqrt{\Delta_F}^3)$ such vectors, where we drop factors that are exponential in $d$ independent of $F$ for simplicity. The number of possible squared distances obtained from pairs of such vectors is  $(R^{2d}/ \sqrt{\Delta_F}$, so a typical squared distance is obtained $ R^{4d} / ( N \Delta_Q \sqrt{\Delta_F}^5)$ times. However, the number of pairs of vectors with squared distance $\alpha$ is proportional to the number of representations of $\alpha$ by the quadratic form $Q$. Now Siegel's mass formula shows that, when we average over quadratic forms $Q$ with a given discriminant, the average number of representations is an exponential factor in $d$ times $ R^{4d} L(1, \chi_{\alpha \Delta_Q}) / ( (N \Delta_Q)^{1/3} \sqrt{\Delta_F}^5)$.

For this to be better than $n^{4/3}$, we must have $L(1,\chi_{\alpha \Delta_Q})$ greater than $(N \Delta_Q )^{1/3} \sqrt{\Delta_F}$. We may express $L(1,\chi_{\alpha \Delta_Q})$ as a conditionally convergent Euler product with a factor of $\frac{1}{1- \frac{1}{NP}}$ for each prime $P$ of $\mathcal O_F$ split in $F(\sqrt{\alpha \Delta_Q})$ and a factor of $\frac{1}{1+\frac{1}{NP}}$ for each prime $P$ of $\mathcal O_F$ inert in $F(\sqrt{\alpha \Delta_Q})$, so the situation is similar to the distinct distances problem, except worse as we also have to contend with the factors at the inert primes. 

\section{Arul Shankar}

I had not encountered this problem before seeing the proof from OpenAI, and I found the proof to be a clean execution of a very beautiful idea and quite well written up.
This impressive proof is (loosely speaking) a generalization of Erdos' original lower bound. In his proof, Erdos took elements in $\Z[i]$ of bounded absolute value, and used these to construct algebraic numbers of absolute value $1$ with bounded denominators. Then the bound goes to infinity, essentially by expanding the set of integer primes allowed to contribute to the elements in $\Z[i]$. This new proof changes this paradigm quite a bit. First, instead of fixing the field and varying the integer primes, the set of integer primes is fixed and the field is varied. Second, rather than taking a family of fixed degree fields to vary over, the new proof uses a class field tower of fields.

The first change of perspective is familiar in some fields of number theory, especially arithmetic statistics. For example, it is common to study a complementary question to the Chebotarev density theorem, where instead of fixing a field and counting the number of (bounded norm) primes that split completely, we fix a prime and count the number of fields (of bounded height) in a family in which the prime splits completely. The second paradigm shift is significantly less common, especially, as pointed out previously, in "fixed dimension" settings. All the same, I would consider this to be a very "human" proof, though a extremely ingenious one.

The model's CoT is deeply interesting. It is noteworthy that a significant majority of the thoughts are trying to construct a counterexample to the widely believed upper bound, rather than trying to prove it. This argues that the model has some combination of good intuition, willingness to try approaches considered long-shot by the community, and a predisposition to attempt constructions.

The CoT showed the model trying out a vast array of ideas from a wide range of mathematics for the required construction. The model went through ideas pretty quickly, but when it reached the crucial idea (in the paragraph starting with "Suppose optimistically that..."), it honed in on the proof quite methodically.

In my opinion this paper demonstrates that current AI models go beyond just helpers to human mathematicians -- they are capable of having original ingenious ideas, and then carrying them out to fruition.

\section{Jacob Tsimerman}

This is a really impressive piece of work, and I would accept it for any journal without hesitation. I actually briefly worked on this problem and tried to make a counterexample, but failed to make progress.

On Boris Alexeev's suggestion, I thought about this problem with the idea of making a counterexample stemming from a varying family of bounded degree number fields. Increasing degree occured to me, but is a very scary dynamic and often doesn't work out. Moreover, it is hard to think through the analytic regimes and retain guiding intuition - it consumes much time and frequently doesn't work out. While it's true in the final solution that nothing is all that surprising, there are many ways to attempt to set this construction up (how big are the primes?  How big is the ball? Do you take large products?  how much splitting does one insist on - this is a tradeoff with how easy it is to make the field). It is definitely an intimidating construction to see through even if you know what is going on, and even harder to go play for yourself. It's always tempting to look at a completed proof and declare it obvious after the fact. 

This may indicate one way that AI systems have an edge: it's not just that they can try all known methods, but they can play for longer and in more treacherous waters than mathematicians  without getting overwhelmed. Of course this is not yet robustly true, but this may be a foreshadowing event.

\section{Victor Wang}

My previous exposure to this topic came from some beautiful lectures over ten years ago by Larry Guth, on the distinct distances problem, and by Po-Shen Loh, on the unit distance problem.
They focused on the geometric and combinatorial aspects of these problems.
I have not thought much about the problems since, but recently I enjoyed hearing about \cite{Alon2025Typical} from Noga Alon.
Only now have I begun to really appreciate the role number theory has to play for special metrics, such as the present $\ell^2$-norm on $\R^2$, and more generally the $\ell^k$-norm on $\R^s$ where $s,k\ge 2$ are integers.

The case $s=k$ is particularly interesting to me.
Here the unit distance problem, specialized to scalar multiples of integer boxes $[-B,B]^k$, is essentially equivalent to the study of the Hardy-Littlewood Hypothesis~K for $k$-fold sums of $k$th powers of nonnegative integers.
If $r_k(n)$ denotes the number of ways to write an integer $n\ge 1$ as such a sum of powers, then $r_2(n) \le O(n^{o(1)})$, but this is no longer true for $r_3(n)$ (Mahler), and it represents a tantalizing open question for $k\ge 4$;
see \url{https://www.erdosproblems.com/322} and \url{http://thomasbloom.org/notes/hypothesisk.html} for details.
Meanwhile, the distinct distances problem, specialized to $[-B,B]^k$, is essentially equivalent to asking how many integers of size $O(B^k)$ can be written as a sum of $k$ nonnegative $k$th powers.
This is a difficult open question for $k\ge 3$, lacking the multiplicative structure present for $k=2$;
see the recent survey \cite{maynard2026sums} for details in the case $k=3$.

Sequences of objects of increasing degree have proven decisive in many problems over finite fields (and function fields) in general, and over number fields in Iwasawa theory and elsewhere.
Their role in combinatorics also has precedent.
The new unit-distance construction is a reminder of the power of this idea, making use of high-dimensional multiplicative number theory in high-degree fields.

The fact that a complete argument can be given in a few pages (assuming background in algebraic number theory at the level of a first or second semester graduate course), as in the present remarks, made the verification process relatively smooth.
Had the final digested argument been a bit longer, the process may have been trickier, as participants may have been less willing to invest time in checking details.
It will be interesting to see how formalization progresses alongside AI.

The implicit social contract between mathematicians and AI companies
deserves further attention.
When Hajir, Maire, and Ramakrishna wrote their beautiful papers
\cite{hajir2001asymptotically,hajir2021cutting},
did they have in mind that an AI might eventually use their work
(as the CoT likely indicates)
to derive headline results,
potentially with significant ensuing financial implications?
When we make our work freely available on the arXiv,
do we all implicitly want it to be freely available to AI as well?

I do not want to comment further on the trajectory of AI, which seems to me to be a complicated question involving physics, materials, society, and the environment.

\section{Melanie Matchett Wood}

I had not heard of this problem before hearing of the solution from Open AI.  I find the argument to be a beautiful application of number theory to a natural, concrete question.

It is easy to jump to hasty conclusions, but what we can learn about humans, AI, and mathematics from this development is somewhat subtle.  I believe if the level and type of human expertise that is represented on this note had been assembled to find a counterexample to this conjecture a month ago, and those people put in similar amounts of time working on it than they did to reading and thinking about Chat GPT’s solution, the mathematicians would have found a counterexample.  However, without the claimed proof by Chat GPT, there is no particular reason anyone would have tried to look for a counterexample, assembled a group of experts with the appropriate expertise, or that the experts would have agreed to turn their attention to this problem.  We can all be reminded by this development of how frequently interesting and powerful things happen mathematically when one applies ideas from one field to another, and think about how AI can help us find more cross-field applications.

This result does not show us all the times AI has claimed to have a proof of something and been wrong.  Without that context (which many of us have just from personal experience), it is also easy to draw incorrect conclusions about the current state of AI and research mathematics.  In many cases, it will be  easier for AI to convince humans it has a proof than to come up with a correct mathematical argument, and I believe that we as mathematicians are not sufficiently prepared for this.  

One other concern that directly arises in this development is that there is a history of closely related ideas in the literature, some of which are mentioned above, but which are not appropriately referenced  in Chat GPT’s paper.  If a human came up with this argument and didn’t cite such previous work, we would assume that they were unfamiliar with the previous work and came up with the ideas independently, since our professional norms require us to cite previous work whose ideas influenced our work.  On the other hand, Chat GPT is in some sense “familiar” with all the previous work.  In the future we can expect humans to write many papers that include ideas suggested by AI.  Mathematicians need to think about what best practices and proper citation is in these kind of situations, and come to a common understanding as a community.  

Properly contextualized, we can see from this and other developments that AI will play an increasingly important role in research mathematics.   As a mathematics community, we urgently need to plan for how we can keep our work rigorous and correct, properly acknowledge the influence of previous ideas, and preserve a high level of human understanding of mathematics as we move forward in our use of AI as part of process of research mathematics.

\bibliographystyle{amsplain0}
\bibliography{main}

\providecommand{\bysame}{\leavevmode\hbox to3em{\hrulefill}\thinspace}
\providecommand{\MR}{\relax\ifhmode\unskip\space\fi MR }
% \MRhref is called by the amsart/book/proc definition of \MR.
\providecommand{\MRhref}[2]{%
  \href{http://www.ams.org/mathscinet-getitem?mr=#1}{#2}
}
\providecommand{\href}[2]{#2}
\begin{thebibliography}{10}

\bibitem{akhtari2025effective}
Shabnam Akhtari, Jeffrey~D. Vaaler, and Martin Widmer, \emph{Effective equidistribution of norm one elements in {CM-fields}}, arXiv preprint arXiv:2507.10387, to appear in Annali della Scuola Normale Superiore di Pisa, Classe di Scienze (2025).

\bibitem{Alon2025Typical}
Noga Alon, Matija Buci\'c, and Lisa Sauermann, \emph{Unit and distinct distances in typical norms}, Geom. Funct. Anal. \textbf{35} (2025), 1--42.

\bibitem{ApplebyFlammiaMcConnellYard2020RayClassFields}
Marcus Appleby, Steven Flammia, Gary McConnell, and Jon Yard, \emph{Generating ray class fields of real quadratic fields via complex equiangular lines}, Acta Arithmetica \textbf{192} (2020), 211--233.

\bibitem{Batyrev1998Manin}
Victor~V. Batyrev and Yuri Tschinkel, \emph{Manin's conjecture for toric varieties}, J. Algebraic Geom. \textbf{7} (1998), 15--53.

\bibitem{belolipetsky2012manifolds}
Mikhail Belolipetsky and Alexander Lubotzky, \emph{Manifolds counting and class field towers}, Advances in Mathematics \textbf{229} (2012), 3123--3146.

\bibitem{BloomErdosProblem90}
Thomas~F. Bloom, \emph{Erd{\H{o}}s problem {\#}90}, \url{https://www.erdosproblems.com/90}, 2026, Accessed 2026-05-09.

\bibitem{BorelPrasad1989Finiteness}
Armand Borel and Gopal Prasad, \emph{Finiteness theorems for discrete subgroups of bounded covolume in semi-simple groups}, Publications Math{'e}matiques de l'IH{'E}S \textbf{69} (1989), 119--171.

\bibitem{BMP}
Peter Brass, William Moser, and J\'{a}nos Pach, \emph{Research problems in discrete geometry}, Springer, New York, 2005.

\bibitem{BrassMoserPach2005}
Peter Brass, William O.~J. Moser, and J{\'a}nos Pach, \emph{Research problems in discrete geometry}, Springer, New York, 2005.

\bibitem{EllenbergVenkatesh2007Reflection}
Jordan~S. Ellenberg and Akshay Venkatesh, \emph{Reflection principles and bounds for class group torsion}, International Mathematics Research Notices \textbf{2007} (2007), Art. ID rnm002, 18 pp.

\bibitem{Er82e}
Paul Erd\H{o}s, \emph{Some of my favourite problems which recently have been solved}, Proceedings of the International {M}athematical {C}onference, Singapore 1981 (Singapore, 1981), North-Holland Math. Stud., vol.~74, North-Holland, Amsterdam-New York, 1982, pp.~59--79.

\bibitem{Er90}
Paul Erd\H{o}s, \emph{Some of my favourite unsolved problems}, A tribute to Paul Erd\H{o}s (1990), 467--478.

\bibitem{Er95}
Paul Erd\H{o}s, \emph{Some of my favourite problems in number theory, combinatorics, and geometry}, Resenhas \textbf{2} (1995), 165--186, Combinatorics Week (Portuguese) (S\~Ao Paulo, 1994).

\bibitem{Erdos1946SetsDistances}
Paul Erd{\H{o}}s, \emph{On sets of distances of {$n$} points}, American Mathematical Monthly \textbf{53} (1946), 248--250.

\bibitem{GolodShafarevich1964ClassFieldTower}
E.~S. Golod and I.~R. Shafarevich, \emph{On the class field tower}, Izv. Akad. Nauk SSSR Ser. Mat. \textbf{28} (1964), 261--272 (Russian), English translation: Amer. Math. Soc. Transl. (2) 48 (1965), 91--102.

\bibitem{GolodShafarevich1965ClassFieldTowersTranslation}
E.~S. Golod and I.~R. Shafarevich, \emph{On class field towers}, Fourteen Papers on Logic, Algebra, Complex Variables and Topology, American Mathematical Society Translations, Series 2, vol.~48, American Mathematical Society, Providence, RI, 1965, English translation of the 1964 Russian paper, pp.~91--102.

\bibitem{Guruswami2003Constructions}
Venkatesan Guruswami, \emph{Constructions of codes from number fields}, IEEE Transactions on Information Theory \textbf{49} (2003), 594--603.

\bibitem{GuthKatz2015DistinctDistances}
Larry Guth and Nets~Hawk Katz, \emph{On the {E}rd{\H{o}}s distinct distances problem in the plane}, Annals of Mathematics \textbf{181} (2015), 155--190.

\bibitem{hajir2001asymptotically}
Farshid Hajir and Christian Maire, \emph{Asymptotically good towers of global fields}, European Congress of Mathematics, Vol. II (Barcelona, 2000) (Carles Casacuberta, Rosa~Maria Mir{\'o}-Roig, Joan Verdera, and Sebasti{\`a} Xamb{\'o}-Descamps, eds.), Progress in Mathematics, vol. 202, Birkh{\"a}user, Basel, 2001, pp.~207--218.

\bibitem{hajir2021cutting}
Farshid Hajir, Christian Maire, and Ravi Ramakrishna, \emph{Cutting towers of number fields}, Annales Math{\'e}matiques du Qu{\'e}bec \textbf{45} (2021), 321--345.

\bibitem{Koch2002GaloisTheoryPExtensions}
Helmut Koch, \emph{Galois theory of {$p$}-extensions}, Springer Monographs in Mathematics, Springer, Berlin, Heidelberg, 2002.

\bibitem{Kopp2021SICPOVMStark}
Gene~S. Kopp, \emph{{SIC-POVMs} and the {Stark} conjectures}, International Mathematics Research Notices (2021), 13812--13838.

\bibitem{Lenstra1986Codes}
Jr. Lenstra, H.~W., \emph{Codes from algebraic number fields}, Mathematics and Computer Science, II, CWI Monographs, vol.~4, North-Holland Publishing Co., Amsterdam, 1986, Proceedings of the conference held in Amsterdam, 1986, pp.~95--104.

\bibitem{Liang2024QaryGV}
Xue-Bin Liang, \emph{The $q$-ary gilbert-varshamov bound can be improved for all but finitely many positive integers $q$}, 2024, Preprint.

\bibitem{LitsynTsfasman1987Constructive}
S.~N. Litsyn and M.~A. Tsfasman, \emph{Constructive high-dimensional sphere packings}, Duke Mathematical Journal \textbf{54} (1987), 147--161.

\bibitem{Littlewood1928}
J.~E. Littlewood, \emph{On the class-number of the corpus $p(\sqrt{-k})$}, Proceedings of the London Mathematical Society \textbf{s2-27} (1928), 358–372.

\bibitem{LuzziVehkalahtiLing2018AlmostUniversal}
Laura Luzzi, Roope Vehkalahti, and Cong Ling, \emph{Almost universal codes for {MIMO} wiretap channels}, IEEE Transactions on Information Theory \textbf{64} (2018), 7218--7241.

\bibitem{Maire2018}
Christian Maire and Fr{\'e}d{\'e}rique Oggier, \emph{Maximal order codes over number fields}, Journal of Pure and Applied Algebra \textbf{222} (2018), 1827--1858.

\bibitem{maynard2026sums}
James Maynard, \emph{Sums of three positive cubes}, Journal of the London Mathematical Society \textbf{113} (2026), e70554.

\bibitem{Shafarevich1963RamificationOriginal}
Igor~R. Shafarevich, \emph{Extensions {\`a} points de ramification donn{\'e}s (en russe)}, Publications Math{\'e}matiques de l'IH{\'E}S \textbf{18} (1963), 71--92 (Russian).

\bibitem{Shafarevich1966RamificationTranslation}
Igor~R. Shafarevich, \emph{Extensions with given points of ramification}, American Mathematical Society Translations, Series 2 \textbf{59} (1966), 128--149, English translation by J. W. S. Cassels; title also cited as ``Extensions with prescribed ramification points''.

\bibitem{SpencerSzemerediTrotter1984UnitDistances}
Joel~H. Spencer, Endre Szemer{\'e}di, and William~T. Trotter, \emph{Unit distances in the euclidean plane}, Graph Theory and Combinatorics (B{\'e}la Bollob{\'a}s, ed.), Academic Press, London, 1984, Proceedings of the Cambridge Conference, 1983, pp.~293--303.

\bibitem{Szekely1997CrossingNumbers}
L{\'a}szl{\'o}~A. Sz{\'e}kely, \emph{Crossing numbers and hard {E}rd{\H{o}}s problems in discrete geometry}, Combinatorics, Probability and Computing \textbf{6} (1997), 353--358.

\bibitem{Tsfasman1991GlobalFields}
Michael~A. Tsfasman, \emph{Global fields, codes and sphere packings}, Ast{\'e}risque \textbf{198--200} (1991), 373--396.

\bibitem{VehkalahtiLuzzi2015NumberField}
Roope Vehkalahti and Laura Luzzi, \emph{Number field lattices achieve gaussian and rayleigh channel capacity within a constant gap}, 2015 IEEE International Symposium on Information Theory (ISIT), IEEE, 2015, pp.~436--440.

\bibitem{Venkatesh2013SpherePackings}
Akshay Venkatesh, \emph{A note on sphere packings in high dimension}, International Mathematics Research Notices (2013), 1628--1642.

\bibitem{walfisz1942class}
Walfisz, \emph{On the class-number of binary quadratic forms}, Trav. Inst. Math. Tbilissi \textbf{11} (1942), 57--71.

\end{thebibliography}

\end{document}